\documentclass{amsart}%
\usepackage{amssymb,mathabx}
\parindent0mm
\usepackage{color}
\usepackage{amsmath}
\usepackage{graphicx}
\usepackage{amsfonts}%
\usepackage[colorlinks=false]{hyperref}
\setcounter{MaxMatrixCols}{30}
\usepackage{setspace}
\usepackage{etoolbox}
\usepackage{tikz-cd}
\usetikzlibrary{matrix,arrows,decorations.pathmorphing,decorations.markings} 
\usepackage{adjustbox}
\usepackage[pagewise]{lineno}
\usepackage{multicol}
\usepackage{tikz}
\usepackage{microtype}

\newcommand{\F}{\mathbb F}

\usepackage{bm}
\usepackage{amsfonts, stmaryrd}
\usepackage[bbgreekl]{mathbbol}
\DeclareSymbolFontAlphabet{\mathbbm}{bbold}
\DeclareSymbolFontAlphabet{\mathbb}{AMSb}
\usepackage{mathrsfs}
\usepackage{tikz-cd}
\usetikzlibrary{matrix,arrows,decorations.pathmorphing,decorations.markings} 
\usepackage{pb-diagram}
\usepackage{enumerate}

\usepackage{hyperref}

\newtheorem{theorem}{Theorem}[section]

\newtheorem{corollary}[theorem]{Corollary}

\newtheorem{definition}[theorem]{Definition}
\newtheorem{example}[theorem]{Example}

\newtheorem{proposition}[theorem]{Proposition}
\newtheorem{remark}[theorem]{Remark}

\numberwithin{equation}{section}
\AtBeginEnvironment{equation}{\leavevmode\singlespace}
\AfterEndEnvironment{equation}{\endsinglespace\vskip0.5\baselineskip\noindent\ignorespaces}

\newcommand{\bk}{\Bbbk}
\usepackage{twoopt}

\newcommandtwoopt\eck[3][G][\Bbbk]{H^*_{#1}(#3; #2)}

\newcommandtwoopt\csk[2][G][\Bbbk]{H^*(B{#1}; #2)}

\DeclareMathOperator{\tor}{Tor} 
\DeclareMathOperator{\coker}{Coker}

\DeclareMathOperator{\im}{im}

\title[Equivariant cohomology for cyclic group actions]{
	The equivariant cohomology for cyclic group actions on some polyhedral products
}
\date{ \today }

\author[Chaves]{Sergio Chaves}
\address[Sergio Chaves]{Department of Mathematics\\ University of Rochester}
\email{schavesr@math.rochester.edu}

\setlength{\parindent}{15pt}
\usepackage{tikz-cd}
\usetikzlibrary{matrix,arrows,decorations.pathmorphing,decorations.markings} 

\usetikzlibrary{hobby}
\usetikzlibrary{scopes,intersections}
\usepackage{adjustbox}
\usepackage{multicol}

\usepackage{pgfplots}
\pgfplotsset{compat=1.9}
\newcommand{\Coordinate}[2]%
{ \coordinate (#1) at (#2);
}

\setlength{\parskip}{\baselineskip}%

\usepackage{spectralsequences}

\begin{document}

\begin{abstract}
The equivariant cohomology for actions of compact connected abelian groups and elementary abelian $p$-groups have been widely studied in the last decades. We study some of these results on actions of finite cyclic groups over a field of positive characteristic. In particular, we study the module structure of the equivariant cohomology over the cohomology of the classifying space and we provide a criterion for freeness over this ring and the polynomial subring of it. We apply these results to canonical actions of finite cyclic groups on polyhedral products arising from the boundary of a polygon.
\end{abstract}

\maketitle

\section{Introduction}\label{se:intro}
Let $G$ be a topological group. For any $G$-space $X$, the  $G$-equivariant cohomology of $X$ with coefficients over a field $\bk$ is defined as the cohomology of the homotopy quotient  $H^*_G(X; \bk) := H^*( EG \times_G X; \bk)$. The equivariant cohomology inherits a canonical module structure over the ring $R_G = H^*(BG;\bk)$. We say that $X$ is  $G$-equivariantly formal over $\bk$ if the restriction map $H^*_G(X;\bk) \rightarrow H^*(X;\bk)$ is surjective. This is equivalent to $H^*_G(X;\bk)$ being a free $R_G$-module and that $H^*(X;\bk)$ is a trivial $G$-module. This is a consequence of the Leray-Hisch Theorem and a particular converse for it \cite{ptori}[Prop.4.4]

The notion of equivariant formality has played an important role in the theory of toric varieties, toric manifolds, symplectic manifolds and polyhedral products, where geometric and combinatorial properties of the space and the action give rise to characterization of equivariant formality  \cite{panov}, \cite{januz}, \cite{lupanov}. In the last decades, this has been a topic of interest for compact connected Lie group actions (and torus actions with an extended detail) \cite{Mab}, \cite{Mtori}  elementary abelian group actions \cite{ptori} and some semidirect product actions \cite{semi}. In this document, we will study the equivariant cohomology for cyclic group actions when cohomology with coefficients over a field of positive characteristic is considered.  

If $G$ is a finite group, the cohomology of the classifying space $H^*(BG;\bk)$ is canonically identified with the group cohomology $H^*_{grp}(G;\bk)$. We will use the notation $H^*(BG;\bk)$ or $H^*_{grp}(G;\bk)$ indistinguishably to refer to the ring $R_G$. In this document, we focus on the equivariant cohomology for actions of finite cyclic groups. We also refer by a $G$-space $X$ to a finite $G$-CW complex so the equivariant cohomology of $X$ is finitely generated over $R_G$.

Let $G$ be a cyclic group of order $n$, let $\bk = \F_p$ be a field of characteristic $p$ where $p$ is a prime dividing $n$. The ring $R_G$ is the tensor product of an exterior algebra $\Lambda$ and a polynomial ring $P_G$. Since $R_G$ is not necessarily an integral domain, we will also study the module structure of the equivariant cohomology over the subring $P_G$ from a slight different perspective of that in \cite{ptori} where this module structure is also discussed. One of the main results of this document is the following.

\begin{theorem}\label{teo:1.1}
	Let $G = \langle \sigma \rangle$ be a cyclic group of order $n$ and $X$ be a $G$-space. 
\begin{itemize}
\item Let $K \subseteq G$ be the subgroup of $G$ of order $p$. $X$ is $G$-equivariantly formal if and only if  $X$ is $K$-equivariantly formal and $X^K$ is $G$-equivariantly formal.
	
\item Consider the cohomological Serre spectral sequence arising from the fibration $X \rightarrow EG\times_G X \rightarrow BG$,	 and assume that it degenerates at the $E_2$-term. If the representation $(I+\sigma + \sigma^2 + \cdots + \sigma^{n-1}) = 0$ in $H^*(X)$, then $H^*(X)$ is a free $P_G$-module.
\end{itemize}
\end{theorem}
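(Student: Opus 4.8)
The plan is to handle the two bullets separately: the first by reducing to a cyclic $p$-group and then applying the localization theorem and Smith theory, the second as a short homological computation from the hypothesis that the spectral sequence degenerates.

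\emph{Reductions and tools for the first bullet.} Write $n=p^{a}m$ with $p\nmid m$; over $\bk=\F_{p}$ the transfer for the prime-to-$p$ index $[G:\Z/p^{a}]$ identifies $R_{G}$ with $R_{\Z/p^{a}}$ and $H^{*}_{G}(Y)$ with $H^{*}_{\Z/p^{a}}(Y)^{G/(\Z/p^{a})}$ for every $G$-space $Y$, and a routine check will reduce the first statement to $G=\langle\sigma\rangle=\Z/p^{a}$, which I assume henceforth. Then $K$ is the unique subgroup of order $p$, every non-trivial subgroup of $G$ contains $K$, and hence $G$ and $K$ both act freely on $X\setminus X^{K}$, so that $H^{*}_{G}(X,X^{K})$ and $H^{*}_{K}(X,X^{K})$ are finite dimensional; consequently the restrictions $H^{*}_{G}(X)\to H^{*}_{G}(X^{K})$ and $H^{*}_{K}(X)\to H^{*}_{K}(X^{K})$ become isomorphisms, naturally in all residual actions, after inverting the degree-$2$ polynomial generator $t$. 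I also use the criterion from the introduction ($Y$ is $H$-equivariantly formal iff $H^{*}_{H}(Y)\to H^{*}(Y)$ is onto, iff $H^{*}_{H}(Y)$ is $R_{H}$-free and $H$ acts trivially on $H^{*}(Y)$) together with the observation that a module free over $R_{H}$ is $t$-torsion free, hence embeds in its $t$-localization.

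\emph{First bullet.} For $(\Rightarrow)$: if $X$ is $G$-equivariantly formal then $H^{*}_{K}(X)\to H^{*}(X)$ factors the surjection $H^{*}_{G}(X)\to H^{*}(X)$, so it is onto and $K$ acts trivially on $H^{*}(X)$; thus $X$ is $K$-equivariantly formal and $\dim_{\bk}H^{*}(X^{K})=\dim_{\bk}H^{*}(X)$ by Smith theory. Since $H^{*}_{G}(X)\cong R_{G}\otimes H^{*}(X)$, inverting $t$ exhibits $H^{*}_{G}(X^{K})[t^{-1}]$ as a free $P_{G}[t^{-1}]$-module of rank $2\dim_{\bk}H^{*}(X^{K})$, and comparing with the $t$-localized Serre spectral sequence of $X^{K}\to EG\times_{G}X^{K}\to BG$, whose $E_{2}$-page is $\bigoplus_{q}H^{*}(G;H^{q}(X^{K}))$, a rank count forces both the triviality of the $G$-action on $H^{*}(X^{K})$ and the degeneracy of that spectral sequence, hence $G$-equivariant formality of $X^{K}$. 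For $(\Leftarrow)$: assume $X$ is $K$-equivariantly formal and $X^{K}$ is $G$-equivariantly formal, so that $H^{*}_{K}(X)\cong R_{K}\otimes H^{*}(X)$ is $R_{K}$-free, $K$ acts trivially on $H^{*}(X)$, $\dim_{\bk}H^{*}(X)=\dim_{\bk}H^{*}(X^{K})$, the group $G/K$ acts trivially on $H^{*}(X^{K})$, and $H^{*}_{G}(X^{K})$ is $R_{G}$-free. First I would show that $G$ acts trivially on $H^{*}(X)$: naturality of the $t$-localization gives a $G/K$-equivariant isomorphism $H^{*}_{K}(X)[t^{-1}]\cong H^{*}_{K}(X^{K})[t^{-1}]=R_{K}[t^{-1}]\otimes H^{*}(X^{K})$ whose target has trivial $G/K$-action, and since $H^{*}_{K}(X)$ embeds $G/K$-equivariantly in its localization, $G/K$ acts trivially on $H^{*}_{K}(X)$ and hence on its quotient $H^{*}(X)$. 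Then the Serre spectral sequence of $X\to EG\times_{G}X\to BG$ has $E_{2}=R_{G}\otimes H^{*}(X)$, while $H^{*}_{G}(X)[t^{-1}]\cong H^{*}_{G}(X^{K})[t^{-1}]=R_{G}[t^{-1}]\otimes H^{*}(X^{K})$ has the same $P_{G}[t^{-1}]$-rank as $E_{2}[t^{-1}]$; as the localized $E_{\infty}$-page is a full-rank subquotient of the $t$-torsion-free module $E_{2}[t^{-1}]$, every differential vanishes after inverting $t$ and hence vanishes, and degeneracy together with triviality of the $G$-action gives $G$-equivariant formality of $X$. The hard part will be this triviality step: lacking any geometric splitting of $H^{*}_{G}(X)$, one is forced to extract it indirectly from the behaviour of the $K$-level localization under the residual $G/K$-action.

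\emph{Second bullet.} Suppose the Serre spectral sequence of $X\to EG\times_{G}X\to BG$ degenerates at $E_{2}$. Then, for the Serre filtration --- finite in each total degree --- the associated graded of $H^{*}_{G}(X)$ is, as a module over $R_{G}$ and in particular over $P_{G}$,
\[
\operatorname{gr}H^{*}_{G}(X)\;\cong\;E_{2}\;=\;\bigoplus_{q\ge 0}H^{*}\bigl(G;H^{q}(X;\bk)\bigr),
\]
the group cohomology being taken with the genuine $G$-action on each $H^{q}(X;\bk)$. Now $P_{G}$ is a polynomial ring on a single generator $t$ of degree $2$, so a finitely generated graded $P_{G}$-module is free iff it is $t$-torsion free. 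For a $\bk[G]$-module $M$, the $2$-periodicity of the cohomology of a cyclic group makes $t\colon H^{i}(G;M)\to H^{i+2}(G;M)$ an isomorphism for $i\ge 1$ and equal to the projection $M^{G}\to M^{G}/NM$ for $i=0$, where $N=I+\sigma+\cdots+\sigma^{n-1}$ is the norm; since $NM\subseteq M^{G}$, the module $H^{*}(G;M)$ is $t$-torsion free --- equivalently $P_{G}$-free --- precisely when $NM=0$. Taking $M=H^{q}(X;\bk)$, which by hypothesis is annihilated by $N$, shows that $\operatorname{gr}H^{*}_{G}(X)$ is $P_{G}$-free, and freeness lifts from the associated graded of a degreewise-finite filtration to the module itself, since a nonzero $t$-torsion element of top filtration degree would survive to the associated graded. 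Hence $H^{*}_{G}(X)$ is a free $P_{G}$-module. This part is essentially formal once degeneracy is assumed; the real work, as noted, sits in the converse direction of the first bullet.
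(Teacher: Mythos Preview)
Your argument is correct and, at the level of ideas, matches the paper's approach on both bullets. For the second bullet the paper does essentially what you do: it identifies $E_{2}^{*,q}=H^{*}(G;H^{q}(X))$, computes the $P_{G}$-action via the periodic resolution (the cup-product computation there amounts exactly to your statement that $t$ acts as the identity for $i\geq 1$ and as $M^{G}\to M^{G}/NM$ for $i=0$), and then lifts freeness through the degenerate filtration; your packaging via ``$t$-torsion free $\Leftrightarrow$ free over $\bk[t]$'' is a bit cleaner.

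The one point worth flagging is your reduction to the Sylow $p$-subgroup in the first bullet. The paper avoids this entirely: it proves the localization theorem directly for arbitrary cyclic $G$ by observing that for $x\notin X^{K}$ the stabilizer $G_{x}$ has order coprime to $p$, so $j_{x}^{*}$ kills all of $P_{G}$. This gives $H^{*}_{G}(X)[t^{-1}]\cong H^{*}_{G}(X^{K})[t^{-1}]$ without assuming $G$ is a $p$-group, after which your rank-counting argument goes through verbatim for general $G$. Your transfer reduction is not wrong, but the ``routine check'' it defers --- that $G$-formality of $Y$ is equivalent to $\Z/p^{a}$-formality together with triviality of the residual $G/(\Z/p^{a})$-action on $H^{*}(Y)$, and that this residual action matches up between $X$ and $X^{K}$ --- ends up invoking the very same $K$-level localization argument you use later. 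So the reduction is redundant rather than incorrect; the paper's direct route is shorter.
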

Notice that freeness over $P_G$ does not require $G$ to act trivially over the cohomology of the space.  When the action happens to be trivial, we  also characterize the $G$-equivariant cohomology of a space $X$ in terms the $K$-equivariant cohomology of the space as we state in the following result.
\begin{theorem}
  Suppose that $G$-acts trivially on the cohomology of $X$. There is an isomorphism of $R_K$-algebras
	\[ H^*_K(X) \cong H^*_G(X) \otimes_{R_G} R_K \]
\end{theorem}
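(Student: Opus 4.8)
The plan is to construct a canonical homomorphism of $R_K$-algebras and prove that it is an isomorphism. Since $EG$ serves as a model for $EK$, the inclusion $K\hookrightarrow G$ induces a map of Borel constructions $EG\times_K X\to EG\times_G X$ lying over $BK\to BG$; applying $H^*(-;\bk)$ gives a homomorphism of $R_G$-algebras $H^*_G(X)\to H^*_K(X)$ compatible with the restriction $\rho\colon R_G\to R_K$, and together with the $R_K$-algebra structure of $H^*_K(X)$ this produces
\[
\Phi\colon H^*_G(X)\otimes_{R_G}R_K\longrightarrow H^*_K(X),
\]
which is the map to be shown to be an isomorphism. As only the $p$-part of $G$ affects $\bk$-cohomology, one reduces immediately to $G=\Z/p^a$ and $K=\Z/p$; if $a=1$ then $K=G$ and $\Phi=\mathrm{id}$, so assume $a\ge 2$.

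First I would record the $R_G$-module structure of $R_K$. For $a\ge 2$ one has $R_G=\Lambda(u)\otimes P_G$ with $\deg u=1$, the restriction $\rho$ kills $u$ and embeds $P_G$ onto its image, and $R_K$ is free of rank $2$ over $\rho(P_G)\cong R_G/(u)$; hence $R_K\cong R_G/(u)\oplus\bigl(R_G/(u)\bigr)[1]$ as a graded $R_G$-module. Since $u^2=0$, the module $R_G/(u)$ has the $2$-periodic free resolution $\cdots\xrightarrow{u}R_G\xrightarrow{u}R_G\to R_G/(u)$, so for every graded $R_G$-module $M$ and every $s\ge 1$ one gets $\tor^{R_G}_{s}(M,R_K)\cong\bigl(\ker(u\mid_M)/\im(u\mid_M)\bigr)^{\oplus 2}$, while $\tor^{R_G}_{0}(M,R_K)=M\otimes_{R_G}R_K$. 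Now $EG\times_K X$ is the homotopy pullback $(EG\times_G X)\times_{BG}BK$ --- equivalently $EG\times_G(G/K\times X)$, so that $R_K\cong H^*_G(G/K)$ --- and the Eilenberg--Moore/base-change spectral sequence of this square reads
\[
E_2^{-s,t}=\tor^{R_G}_{s}\bigl(H^*_G(X),R_K\bigr)^{\,t}\ \Longrightarrow\ H^{t-s}_K(X),
\]
with $\Phi$ as its edge homomorphism in homological degree $0$. Therefore $\Phi$ is an isomorphism as soon as $\tor^{R_G}_{s}(H^*_G(X),R_K)=0$ for all $s\ge 1$, that is, as soon as the complex $\cdots\xrightarrow{u}H^*_G(X)\xrightarrow{u}H^*_G(X)\to\cdots$ is exact: $\ker(u)=\im(u)$ on $H^*_G(X)$.

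The remaining step, which is the main obstacle, is to deduce this exactness from the hypothesis that $G$ acts trivially on $H^*(X;\bk)$. I would run the cohomological Serre spectral sequence of $X\to EG\times_G X\to BG$: triviality of the coefficients gives $E_2=R_G\otimes H^*(X)=\bigl(P_G\otimes H^*(X)\bigr)\oplus u\bigl(P_G\otimes H^*(X)\bigr)$, and since $u$ is a permanent cycle with $u^2=0$, multiplication by $u$ is a chain endomorphism of every page and of $E_\infty=\operatorname{gr} H^*_G(X)$; on $E_2$ it carries the first summand isomorphically onto the second, so the $u$-Koszul complex of $E_2$ is exact in positive homological degree. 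One then propagates this exactness through the pages, using the short exact sequences of $d_r$-complexes $0\to uE_r\to E_r\xrightarrow{u}uE_r\to 0$, down to $E_\infty=\operatorname{gr} H^*_G(X)$, and finally to $H^*_G(X)$ itself by convergence (legitimate because $X$ is a finite $G$-CW complex). An equivalent and more conceptual route goes through the Gysin/transfer sequence of the $\Z/p$-cover $EG\times_H X\to EG\times_G X$ attached to the index-$p$ subgroup $H$, which is classified by $u$: it identifies $\ker(u)$ with the image of the transfer and $\im(u)$ with the kernel of the restriction, so exactness of $(H^*_G(X),u)$ amounts to the vanishing of the norm $N=\sum_{g\in G/H}g^*$ on $H^*_H(X;\bk)$, and this is precisely what the trivial $G$-action provides, since it makes $G/H$ act trivially on the associated graded of $H^*_H(X)$. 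The difficulty is concentrated exactly here --- in controlling how the exterior generator $u$ interacts with the higher differentials, equivalently in the unipotency of the relevant covering action; once that is settled, the remaining checks (compatibility of $\Phi$ with the identifications, and the legitimacy of the Eilenberg--Moore spectral sequence over the non-simply-connected base $BG$, where one uses that $BK\to BG$ has finite fibre) are routine.
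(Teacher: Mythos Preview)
Your route and the paper's share the same backbone---the Eilenberg--Moore spectral sequence of the pullback $X_K\to X_G$ over $BK\to BG$---but diverge at the key step of showing that $\tor^{R_G}_{s}(H^*_G(X),R_K)=0$ for $s\ge 1$. The paper dispatches this in one line by invoking its earlier flatness result (Proposition~\ref{prop2.3}): $R_K$ is \emph{flat} over $R_G$, so higher Tor vanishes for \emph{every} coefficient module, and the spectral sequence collapses onto its zeroth column with no further input from $X$ beyond the triviality of the local coefficient system needed to set up the spectral sequence in the first place.

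You instead resolve $R_K\cong (R_G/(u))^{\oplus 2}$ via the $2$-periodic complex $\cdots\xrightarrow{u}R_G\xrightarrow{u}R_G$, obtain $\tor^{R_G}_{s}(M,R_K)\cong(\ker u/\im u)^{\oplus 2}$ for any $M$, and then attempt to verify $\ker u=\im u$ on $M=H^*_G(X)$ by propagating $u$-exactness through the pages of the Serre spectral sequence (or, alternatively, through a transfer/Gysin argument). This is a genuinely different and harder path: your own Tor formula shows the vanishing fails for general $M$ (take $M=\bk$), so you are forced to exploit something particular to $H^*_G(X)$, and you rightly flag the propagation of $u$-exactness through the higher differentials as the essential difficulty---one you leave as a sketch rather than a proof. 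The paper's approach buys a one-line argument by packaging all the work into the purely algebraic flatness statement about $R_K$ alone; your approach, if the propagation step were made rigorous, would yield an argument independent of that proposition, but as written it is incomplete at exactly the point you identify.
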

This theorem is a consequence of $G$ and $K$ forming a flat extension pair. We also extend the ideas introduced in \cite{semi} from free extension pairs of groups to weak extension pairs and flat extension pairs of groups, where finite cyclic groups happen to fit.

With the theory developed in this document, we study the equivariant cohomology of some Riemann surfaces that arise from a polyhedral product with a canonical action of finite cyclic groups as started in \cite{Das} and also covered in a more general setting in \cite{poly-yu}. By considering $K_n$ the boundary of the $n$-gon as a finite complex, the canonical action of the cyclic group of order $n$ arises to an action on the polyhedral product $X = \mathcal{Z}_{K_n}(D^1,S^0)$. In particular, we characterize the values of $n$ and $p$ so $H^*_G(X;\F_p)$ is free over $R_G$ and $P_G$ as we summarize in the following result.

\begin{corollary}\ 
\begin{itemize}
 	\item $X$ is $G$-equivariantly formal over $\mathbb{F}_p$ if and only if $n = p = 2$ or $n = p = 3$.
	\item The $G$-equivariant cohomology of $X$ is free over $P_G$ if and only if $n=p=3$ or $n=4,p=2$.
	\item In any other case, the $G$-equivariant cohomology is not free over $P_G$.
\end{itemize}
\end{corollary}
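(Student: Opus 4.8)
The plan is to reduce the statement to an explicit topological model of $X=\mathcal Z_{K_n}(D^1,S^0)$ and then feed it through the two parts of Theorem~\ref{teo:1.1}. As the real moment--angle complex of the boundary of the $n$-gon, $X$ is a closed orientable surface, of genus $g_n=1+(n-4)2^{\,n-3}$ for $n\ge 3$, so $\dim_{\F_p}H^*(X;\F_p)=2+2g_n=4+(n-4)2^{\,n-2}$; moreover the Hochster-type decomposition $H^*(X;\F_p)\cong\bigoplus_{J\subseteq[n]}\widetilde H^{*-1}\big((K_n)_J\big)$ identifies the $G=\langle\sigma\rangle$-module structure, with $\sigma$ permuting summands by the cyclic $\Z/n$-action on subsets $J$ and acting on $\widetilde H^0\big((K_n)_J\big)$ by the induced permutation of the connected components of the full subcomplex $(K_n)_J$; in particular $G$ acts trivially in degrees $0$ and $2$ and $H^1(X;\F_p)$ is spanned by the classes attached to the disconnecting subsets $J$. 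For $K=\langle\sigma^{n/p}\rangle\cong\Z/p$ a fixed point has interior coordinate set a $K$-invariant simplex of $K_n$, and since the $K$-orbits on $[n]$ all have size $p\ge 2$ that set is empty (for $n\ge 3$): thus $X^K$ is the set of $2^{n/p}$ vertices of the ambient cube that are $(n/p)$-periodic, on which $G$ acts through $\Z/n\twoheadrightarrow\Z/(n/p)$ as the cyclic action on binary strings of length $n/p$; its orbits are the $\nu(n/p)$ binary necklaces of that length, each with stabilizer cyclic of order divisible by $p$, so $\dim_{\F_p}H^i_G(X^K;\F_p)=\nu(n/p)$ for every $i\ge 0$. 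I also record that $X^G$ consists of the two constant vertices, so $X^G\ne\emptyset$. (The degenerate case $n=2$ is treated by hand.)

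For the first bullet I apply the first part of Theorem~\ref{teo:1.1}: $X$ is $G$-equivariantly formal over $\F_p$ iff $X$ is $K$-equivariantly formal and $X^K$ is $G$-equivariantly formal. With $K\cong\Z/p$, Smith theory gives that $X$ is $K$-equivariantly formal iff $\dim_{\F_p}H^*(X^K;\F_p)=\dim_{\F_p}H^*(X;\F_p)$, i.e. $2^{n/p}=4+(n-4)2^{\,n-2}$. For $n\ge 6$ the right side exceeds $2^{\,n-1}>2^{\,n/2}\ge 2^{\,n/p}$, and checking $n\le 5$ by hand leaves exactly $(n,p)\in\{(2,2),(3,3),(4,2)\}$. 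When $n=p$ (the cases $(2,2),(3,3)$) we have $K=G$ and $X^K=X^G$ carries the trivial action, hence is $G$-equivariantly formal, so $X$ is. When $(n,p)=(4,2)$, $X^K$ is a four-point set on which $G=\Z/4$ acts with orbit type $1+1+2$, and a nontrivial action of a $p$-group on a finite set is never equivariantly formal over $\F_p$ (an orbit $G/H$ with $H\ne G$ contributes $H^*(BH;\F_p)$, which is not free over $H^*(BG;\F_p)$), so $X$ is not $G$-equivariantly formal. This proves the first bullet.

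For the $P_G$-statements I use that $P_G$ is a polynomial ring in one variable over $\F_p$, so $H^*_G(X;\F_p)$ is free over $P_G$ iff it is $\beta$-torsion free, and --- via the localization long exact sequence of $(X,X^K)$, whose relative term is $\beta$-torsion because $X\setminus X^K$ has all stabilizers of order prime to $p$ --- this is equivalent to the restriction $H^*_G(X;\F_p)\to H^*_G(X^K;\F_p)$ being injective. Because $X^G\ne\emptyset$, the edge homomorphism $H^*(BG;\F_p)\to H^*_G(X;\F_p)$ is injective, which forces $E_\infty^{*,0}=E_2^{*,0}$ in the Serre spectral sequence of $X\to EG\times_G X\to BG$, so no differential touches the bottom row, $d_2$ vanishes on $E_2^{0,1}$, and therefore $\dim_{\F_p}H^1_G(X;\F_p)=1+\dim_{\F_p}H^1(X;\F_p)^G$. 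The two equivariantly formal cases are $P_G$-free at once, since a free $R_G$-module is free over the subring $P_G$. For $(4,2)$ I verify the hypotheses of the second part of Theorem~\ref{teo:1.1}: restricting to $\langle\sigma^2\rangle$, which acts trivially on $H^*(T^2;\F_2)$ and satisfies the Betti equality, shows $T^2$ is $\langle\sigma^2\rangle$-equivariantly formal, whence the transgression of the fundamental class vanishes; combined with the vanishing of $d_2$ on $E_2^{0,1}$ and $E_2^{*,1}$ and the derivation property, the spectral sequence degenerates at $E_2$; and the norm $I+\sigma+\sigma^2+\sigma^3$ is zero on $H^*(T^2;\F_2)$ since the only disconnecting full subcomplexes of $C_4$, namely $(C_4)_{\{1,3\}}$ and $(C_4)_{\{2,4\}}$, have nontrivial stabilizer acting transitively on their two components. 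Hence $H^*_G(X;\F_2)$ is free over $P_G$.

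Finally, for every remaining pair $(n,p)$ with $p\mid n$ I contradict injectivity already in degree one: since $\dim_{\F_p}H^1_G(X;\F_p)=1+\dim_{\F_p}H^1(X;\F_p)^G$ and $\dim_{\F_p}H^1_G(X^K;\F_p)=\nu(n/p)$, it suffices to show $1+\dim_{\F_p}H^1(X;\F_p)^G>\nu(n/p)$. By Frobenius reciprocity $\dim_{\F_p}H^1(X;\F_p)^G=\sum_{[J]}\dim_{\F_p}\widetilde H^0\big((K_n)_J\big)^{G_J}$ over the cyclic orbits of disconnecting subsets $J$; the orbits with trivial stabilizer alone contribute $\tfrac1n$ of their total count, which grows like $2g_n/n\sim 2^{\,n-2}$, while $\nu(n/p)\le 2^{\,n/2}$, so the inequality holds for all $n$ past a small bound and the finitely many small $(n,p)$ are settled by the same orbit count. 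A class in the kernel of the restriction is $\beta$-torsion, so $H^*_G(X;\F_p)$ is not $P_G$-free, which completes the corollary. The main obstacle is exactly this last step together with the small-$n$ bookkeeping: one needs the \emph{exact} value of $\dim_{\F_p}H^1(X;\F_p)^G$ in the borderline cases --- a binary-necklace count whose answer depends on the characteristic through the $\pm1$ contributions of antipodal and other nontrivial symmetries of the subcomplexes --- and one must check that the critical small cases $(4,2),(3,3),(2,2)$ are precisely those with $1+\dim_{\F_p}H^1(X;\F_p)^G\le\nu(n/p)$, and that there freeness really holds (for $(4,2)$ via the degeneration and norm computations above).
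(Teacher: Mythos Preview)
Your treatment of the first bullet is essentially the paper's: compute $b(X^K)=2^{n/p}$, solve $2^{n/p}=4+(n-4)2^{\,n-2}$, and then use Corollary~\ref{cor:3.7} to pass from $K$- to $G$-equivariant formality; your extra observation that the $(4,2)$ case fails because $X^K$ has a nontrivial $G$-orbit is exactly what is needed and the paper leaves implicit.

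For the $P_G$-statements your route is genuinely different. The paper does \emph{not} use the restriction-to-$X^K$ criterion. Instead it quotes from Das's thesis an explicit $\bk[G]$-module decomposition
\[
H^1(X;\bk)\;\cong\;\bigoplus_{\substack{w\in\mathcal L_d\\ d\in D_{n,p}}} M_w\;\oplus\;\bigoplus_{w\in\mathcal L_n^+}\bk[G]^{\iota(w)-1},
\]
where the $M_w$ satisfy $N|_{M_w}=0$ (so they contribute $P_G$-free pieces via Theorem~\ref{teo3:2}), and the free $\bk[G]$-summands are indexed by Lyndon words of length $n$ with at least two blocks of zeros. Since $H^p(G;\bk[G])=0$ for $p>0$, each free summand contributes exactly a $\bk$ of $P_G$-torsion in $E_2^{0,1}$, and the paper's general degeneration theorem (proved just before this corollary) then pins the torsion there. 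Hence $P_G$-freeness reduces to the purely combinatorial condition $\mathcal L_n^+=\varnothing$, which one checks holds precisely for $n\in\{3,4\}$. This bypasses both your asymptotic estimate and the small-case bookkeeping entirely.

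Your localization approach is valid in principle, but the step you flag as ``the main obstacle'' is a real gap, and it is sharper than you indicate. Using the degeneration (which you may simply cite from the paper rather than re-argue for $(4,2)$), one computes
\[
\dim H^1_G(X)-\dim H^1_G(X^K)\;=\;\bigl(1+\dim H^1(X)^G\bigr)-\nu(n/p)\;=\;\dim N\!\cdot\! H^1(X)\;-\;1.
\]
So your dimension inequality detects non-freeness only when $\dim N\!\cdot\! H^1(X)\ge 2$, whereas the actual obstruction is $\dim N\!\cdot\! H^1(X)\ge 1$; in the borderline case $\dim N\!\cdot\! H^1(X)=1$ the degree-one dimensions would match and your argument would say nothing, even though torsion is present. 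To close the argument along your lines you would still need to show that $\dim N\!\cdot\! H^1(X)\ne 1$ for every bad $(n,p)$ --- which, once unwound, forces you back to exactly the Lyndon-word count the paper uses (for $n\ge 5$ there are at least two Lyndon words with $\iota\ge 2$, e.g.\ $0^{n-3}101$ and $0^{n-4}1011$, so the quantity is $\ge 2$). In short: your strategy can be completed, but the missing piece is precisely the combinatorics the paper invokes directly.
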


This document is organized as follows: In section \ref{sec:cyclic} we review some properties of the cohomology of a classifying space of  a cyclic group, and we show that a group extension of cyclic groups induces a flat extension of cohomology rings. In Section \ref{sec:fep}, it is revisited the notion of free extension pairs in equivariant cohomology and extended to weak and flat extension pairs.  In Section \ref{se:mod} we study freeness of the equivariant cohomology over $R_G$ and $P_G$ and provide a proof of Theorem \ref{teo:1.1}. In Section 
\ref{se:Example} we provide an example to show that the freeness over the polynomial subring $P_G$ is not a hereditary property. Finally, in Section \ref{sec:poly}, the applications to actions on these polyhedral products are presented.

\textit{Acknowledgments.} The author wants to thank the late Fred Cohen for bringing into his attention the connection of equivariant cohomology with polyhedral products and, in particular, the work  of \cite{Das}. His valuable discussions, suggestions and comments on earlier versions of this document have led to considerable improvements in this work.
\section{Review on cohomology of cyclic groups}\label{sec:cyclic}

Let $p$ be a prime, $n > 1$ be any integer  and $C_n = \langle \sigma \rangle$ be a cyclic group of order $n$. If $p \mid n$, there is an isomorphism of algebras $H^*(BC_n; \F_p) \cong \F_p[x,y]/(x^2)$ where $x$ is of degree $1$ and $y$ is of degree $2$ when either $p > 2$ or $n > 2$ and $p= 2$. The remaining case is $H^*(BC_2. \F_2)  \cong F_2[t]$. Recall that the cohomology  $H^*(BC_n; \bk)$ is trivial over  a field $\bk$ of characteristic zero or $p \nmid n$. If $G = C_n$ and $K$  is  a subgroup of $G$, the following example will help to exhibit the structure of $R_K$ as $R_G$-module.

\begin{example}
	\normalfont
	Consider the short exact sequence $K = C_2 \xrightarrow{j} C_4 \xrightarrow{s} L = C_2$. With cohomology with coefficients over the field $\F_2$, there is a spectral sequence $E_2^{p,q} = H^p(BC_2, H^q(BC_2)) \Rightarrow H^{p+q}(BC_4)$ given as follows:

\begin{center}
\begin{figure}[h]
\centering
\includegraphics[scale=0.5]{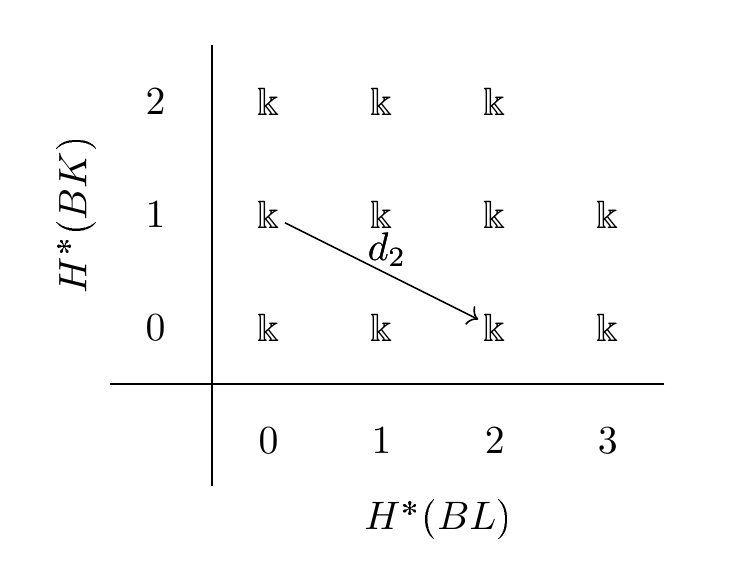}
\end{figure}
\end{center}

Choose isomorphisms $H^*(BL) \cong \F_2[t_L]$ and $H^*(BK) \cong F_2[t_K]$ where the generators are sitting in degree 1. We also have that $E_2 = \bk[t_L] \otimes \bk[t_K]$ and thus this page depends only on the value of $d_2(t_K)$. Since $H^k(C_4) = \bk$ for all $k \geq 0$, we must have that $d_2(t_K) = t_L^2$ and this gives the identities:
\begin{enumerate}
\item $d(t_K^2) = t_Kt_L^2 + t_L^2t_K = 0$.
\item $d(t_K^mt_L^n) = 0$ if $m$ is even, and it is non zero if $m$ is odd.
\end{enumerate}

Therefore, the spectral sequence degenerates at $E_3$ with $E_\infty$-page given as follows:

\begin{center}
\begin{figure}[h]
\centering
\includegraphics[scale=0.50]{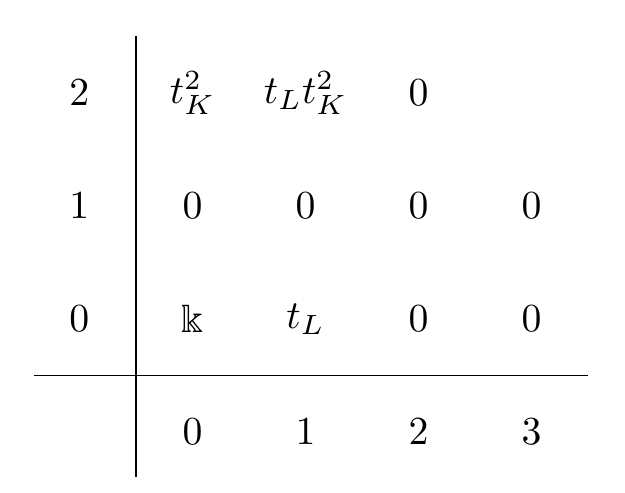}
\end{figure}
\end{center}

If we choose generators $H^*(C_4) \cong \bk[y] \otimes \Lambda[x]$, the above computation shows that $j^*(y) = t_K^2, j^*(x) =0$, and $s^*(t_L) = x$.
\end{example}
A similar spectral sequence argument holds for any cyclic group as we state in the following result.
\begin{proposition}\label{prop:2.1}
	Let $p$ be a prime. Consider a short exact sequence $K \xrightarrow{j} G \xrightarrow{s} L$ of cyclic groups of order divisible by $p$. The induced maps $j^*$ and $s^*$ in cohomology satisfy the following identities:
\end{proposition}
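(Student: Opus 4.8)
The plan is to mimic the $C_2 \to C_4 \to C_2$ computation from the preceding example, running the Lyndon--Hochschild--Serre spectral sequence $E_2^{a,b} = H^a(BL; H^b(BK)) \Rightarrow H^{a+b}(BG)$ with $\F_p$-coefficients, and identifying the differentials by comparing with the known ring structure of $H^*(BG)$ recalled at the start of the section. First I would fix compatible generators: write $H^*(BK) \cong \Lambda[x_K] \otimes \F_p[y_K]$ (or $\F_p[t_K]$ when $p=2$ and $|K|=2$), similarly for $L$ and $G$, with $\deg x = 1$, $\deg y = 2$. Since $K$ acts trivially on $H^*(BK)$ (the action of $L$ on the fiber cohomology is trivial because $BG \to BL$ has a section up to the relevant homotopy, or directly because the extension of abelian groups gives trivial conjugation action), the $E_2$-page is the honest tensor product $H^*(BL) \otimes H^*(BK)$, so the whole spectral sequence is determined by the differentials on the two algebra generators $x_K$ and $y_K$.

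Next I would pin down those differentials. By degree reasons $d_2(x_K) \in H^2(BL)$ and this must be $0$ or a unit multiple of $y_L$ (or $t_L^2$ in the char-2 small case); $d_2(y_K)$ lands in $H^3(BL)$ and here one must split by parity of $p$. The key constraint is that $H^k(BG;\F_p)$ has the prescribed small dimension for every $k$ (dimension $1$ in the $C_2$ case, eventually $2$-periodic dimension $2$ otherwise), which forces the spectral sequence to collapse after killing exactly the right amount: comparing Poincaré series of $E_\infty$ with $H^*(BG)$ leaves essentially one possibility for the pattern of differentials. Concretely, in the case $p$ odd one expects $d_2(x_K)=0$, then $d_3(y_K) = \pm y_L x_L$ up to choosing generators (a Bockstein-type relation), while for $p = 2$ one gets $d_2(t_K) = t_L^2$ as in the example when $|G| \equiv 0 \pmod 4$, with the degenerate case $G = C_2$ handled separately. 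From the surviving $E_\infty$-page one reads off $j^* : H^*(BG) \to H^*(BK)$ (it is the edge map to the fiber, so it sends the generators of $H^*(BG)$ to the classes detected on the $b$-axis of $E_\infty$) and $s^* : H^*(BL) \to H^*(BG)$ (the edge map from the base, sending generators of $H^*(BL)$ to their images on the $a$-axis), yielding the explicit formulas for $j^*$ and $s^*$ on generators that the proposition asserts.

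The main obstacle I anticipate is bookkeeping the several genuinely different cases cleanly — $p$ odd versus $p=2$, and within each whether $|K|, |L|, |G|$ are $2$ or larger — since the shape of $R_G$ degenerates (polynomial rather than exterior-times-polynomial) precisely when a $C_2$ is involved, and the relevant differential jumps from $d_2$ to $d_3$ between the $p=2$ and $p$-odd situations. A secondary subtlety is justifying that the $L$-action on $H^*(BK;\F_p)$ is trivial, which I would dispatch at the outset by noting that $K$ is central in $G$ (any subgroup of an abelian group is central), so conjugation acts trivially on $K$ and hence on $H^*(BK)$. Once the differentials are identified the extraction of $j^*$ and $s^*$ is the standard edge-homomorphism argument and involves no further difficulty beyond checking compatibility of the chosen generators, which is exactly what the displayed example illustrates in miniature.
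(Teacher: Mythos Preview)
Your approach is exactly what the paper intends: it gives no separate proof of the proposition, only the sentence ``A similar spectral sequence argument holds for any cyclic group,'' pointing back to the worked $C_2\to C_4\to C_2$ example. Running the Lyndon--Hochschild--Serre spectral sequence with trivial $L$-coefficients (justified, as you say, by centrality), pinning down the differentials by matching the known Poincar\'e series of $H^*(BG;\F_p)$, and then reading off $j^*$ and $s^*$ as edge homomorphisms is precisely the indicated argument.

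There is, however, a concrete slip in your sketch of the odd-$p$ case that would, if followed literally, produce the wrong maps. You predict $d_2(x_K)=0$ and then a nontrivial $d_3(y_K)$. But $x_K\in E_2^{0,1}$ and $x_L\in E_2^{1,0}$ are the only classes in total degree~$1$; $x_L$ is a permanent cycle for degree reasons, and no higher differential can touch $x_K$ either, so if $d_2(x_K)=0$ both survive and $\dim_{\F_p}H^1(BG)\ge 2$, contradicting $H^*(BG)\cong\Lambda[x_G]\otimes\F_p[y_G]$. The correct pattern (which your own Poincar\'e-series comparison would force) is $d_2(x_K)=y_L$ up to a unit; this is the transgression, and it already collapses the base row to $\Lambda[x_L]$ and the fibre column to $\F_p[y_K]$, giving $\ker j^*=(x_G)$, $\operatorname{im}j^*=H^{2*}(BK)$, $\ker s^*=(y_L)$, $\operatorname{im}s^*=H^{\le 1}(BG)$ exactly as claimed. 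Once you correct that one differential the rest of your outline goes through unchanged.
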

	\begin{enumerate}
	\item $\ker(j^*) = (H^1(BG;\F_p))$,  $\im(j^*) \cong H^{2*}(BK;\F_p)$.
	\item $\ker(s^*) = (H^{\geq 2}(BL;\F_p))$, $\im(s^*) \cong H^{\leq 1}(BG;\F_p)$. 
\end{enumerate}

\begin{corollary}\label{cor:2.1}
Let $G$ be a cyclic group of order $n$ and $\bk$ be a field of characteristic $p \mid n$. Let $R_G = H^*(BG; \bk)$ and $P_G = H^{2*}(BG;\bk)$. For any subgroup $ K \subseteq G$ such that $p \mid |K|$, there is an isomorphism of algebras $P_G \rightarrow P_K$ induced by the inclusion. Moreover, $R_K$ is a free $P_G$-module of rank 2.
\end{corollary}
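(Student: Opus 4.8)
The plan is to reduce to cyclic $p$-groups and then read everything off Proposition~\ref{prop:2.1}. Since every field extension is flat, there is a natural isomorphism $H^*(BH;\bk)\cong H^*(BH;\F_p)\otimes_{\F_p}\bk$, so it suffices to treat $\bk=\F_p$. Write $n=p^k m$ with $p\nmid m$, so $G\cong C_{p^k}\times C_m$; as $|C_m|$ is invertible in $\F_p$ we have $H^i(BC_m;\F_p)=0$ for $i>0$, and the K\"unneth formula shows that restriction along the inclusion of the (unique) Sylow $p$-subgroup gives an isomorphism $R_G\xrightarrow{\ \cong\ }H^*(BC_{p^k};\F_p)$. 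The same holds for $K$, whose Sylow $p$-subgroup $C_{p^j}$ (with $1\le j\le k$) sits inside $C_{p^k}$, and the resulting square of restriction maps commutes. As all maps in sight are maps of graded algebras, they preserve even-degree parts and are compatible with the module structures, so it is enough to prove the statement for $G=C_{p^k}$ and $K=C_{p^j}$ with $1\le j\le k$.

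If $j=k$ then $K=G$, the map $R_G\to R_K$ is the identity, and the only point is that $R_G$ is free of rank $2$ over $P_G$: writing $R_G\cong\F_p[y]\otimes\Lambda[x]$ with $|x|=1$, $|y|=2$ (or $R_G\cong\F_2[t]$ with $|t|=1$ when $p=2$, $k=1$), one has $R_G=P_G\oplus z\,P_G$, where $P_G$ is the even part and $z$ the degree-one generator, so $\{1,z\}$ is a $P_G$-basis. If $j<k$, then $k\ge 2$, hence $R_G\cong\F_p[y]\otimes\Lambda[x]$ with $|x|=1$, $|y|=2$, and $L:=G/K\cong C_{p^{k-j}}$ has order divisible by $p$. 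Applying Proposition~\ref{prop:2.1} to $K\xrightarrow{j}G\xrightarrow{s}L$ gives $\ker(j^*)=(H^1(BG;\F_p))$ and $\im(j^*)=H^{2*}(BK;\F_p)=P_K$. Since $x^2=0$, the ideal $(H^1(BG;\F_p))=(x)$ equals $x\,P_G$, which is exactly the odd-degree part of $R_G$; thus $R_G=P_G\oplus\ker(j^*)$ as graded vector spaces, and $j^*$ restricts to an injection $P_G\hookrightarrow P_K$ whose image is all of $\im(j^*)=P_K$. Hence $j^*|_{P_G}\colon P_G\to P_K$ is an isomorphism of graded algebras.

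It remains to note that $R_K$ is a free $P_K$-module of rank $2$, with basis $\{1,z_K\}$ for $z_K$ the degree-one generator --- the exterior class $x_K$ when $p$ is odd or $p=2,\ j\ge 2$, and the class $t_K$ with $R_{C_2}=\F_2[t_K]$, $P_{C_2}=\F_2[t_K^2]$ when $p=2,\ j=1$ --- coming from the decomposition $R_K=P_K\oplus z_K\,P_K$. Restricting scalars along the isomorphism $P_G\cong P_K$ established above, $R_K$ is free of rank $2$ over $P_G$.

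I expect the only real content to lie in Proposition~\ref{prop:2.1}, specifically the assertion that $\im(j^*)$ is \emph{exactly} the even subring $P_K$ (rather than merely abstractly isomorphic to it), which is what forces $j^*|_{P_G}$ to be surjective. The rest is bookkeeping: checking that the reduction to Sylow subgroups is legitimate and compatible with degrees and module structures, and keeping track of the degenerate case $p=n=2$, where $P_G\subsetneq R_G$ and the rank-$2$ basis must be phrased via ``the degree-one generator'' rather than an exterior class --- though that case occurs only when $K=G$.
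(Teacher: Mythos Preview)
Your proof is correct and follows the route the paper intends: the corollary is stated without proof, to be read off Proposition~\ref{prop:2.1}. Your preliminary reduction to Sylow $p$-subgroups is a worthwhile addition, since Proposition~\ref{prop:2.1} as stated assumes $p\mid |L|=|G/K|$, and this can fail for a general subgroup $K$ (e.g.\ $G=C_{12}$, $K=C_4$, $p=2$); after your reduction to $G=C_{p^k}$ and $K=C_{p^j}$ with $j<k$, the quotient $C_{p^{k-j}}$ always satisfies the hypothesis. An alternative patch for the missing case is to note that when $p\nmid |G/K|$ the restriction $R_G\to R_K$ is already an isomorphism (by transfer, or collapse of the Lyndon--Hochschild--Serre spectral sequence), making the statement immediate there; your Sylow reduction is cleaner and more uniform. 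Your closing remark is on point: Proposition~\ref{prop:2.1} literally says $\im(j^*)\cong P_K$, but since $j^*$ is graded and annihilates exactly the odd part of $R_G$, one has $\im(j^*)\subseteq P_K$ with equal finite dimension in each degree, forcing equality inside $R_K$.
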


Using these results, we see that $R_K$ is not free over $R_G$. However, in the next result we show that it is a flat ring extension.

\begin{proposition}\label{prop2.3}
Let $n$ be a positive integer and $p$ a prime dividing $n$. Let $G$ be a cyclic group of order $n$ and let $K$ be a subgroup such that $p \mid |K|$. Then $R_K$  is a flat as a module over $R_G$ when cohomology coefficients are considered over a field $\bk$ of characteristic $p$.
\end{proposition}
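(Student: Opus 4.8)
The plan is to reduce to cyclic $p$-groups and then read off the module structure from Corollary~\ref{cor:2.1} and Proposition~\ref{prop:2.1}. Write $|G| = p^a m$ and $|K| = p^b m'$ with $p \nmid mm'$; since $K \subseteq G$ we have $b \le a$ and $m' \mid m$, and the hypothesis $p \mid |K|$ forces $b \ge 1$. Using the splittings $C_{|G|} \cong C_{p^a} \times C_m$, $C_{|K|} \cong C_{p^b} \times C_{m'}$, the K\"unneth theorem, and the vanishing of the reduced $\bk$-cohomology of $BC_m$ and $BC_{m'}$, one identifies $R_G$ with $H^*(BC_{p^a};\bk)$, $R_K$ with $H^*(BC_{p^b};\bk)$, and the restriction map attached to $j$ with the restriction homomorphism of the pair $C_{p^b} \subseteq C_{p^a}$. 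If $b = a$ this is an isomorphism and flatness is immediate, so from now on I would assume $b < a$, $G = C_{p^a}$, $K = C_{p^b}$.

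Next I would pin down the $R_G$-module structure of $R_K$. Fix generators with $R_G = \bk[y] \otimes \Lambda[x]$ and $R_K = \bk[y_K] \otimes \Lambda[x_K]$, where $x, x_K$ have degree $1$ and $y, y_K$ degree $2$. By Corollary~\ref{cor:2.1} the restriction $j^*$ carries $P_G = \bk[y]$ isomorphically onto $P_K = \bk[y_K]$ and exhibits $R_K$ as a free $P_G$-module of rank $2$ with basis $\{1, x_K\}$; by Proposition~\ref{prop:2.1} we have $\ker(j^*) = (x)$, so $j^*$ factors as $R_G \twoheadrightarrow R_G/(x) = P_G \xrightarrow{\ \sim\ } P_K \hookrightarrow R_K$. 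Thus, as an $R_G$-module, $R_K \cong P_K \oplus P_K\,x_K$, with $R_G$ acting through the surjection onto $P_G$ and the induced action of the exterior generator $x$ being trivial.

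The remaining — and decisive — step is to promote this description to flatness over $R_G$ itself. Since $R_G$ is Noetherian, connected graded with residue field $\bk = R_G/(x,y)$, and $R_K$ is finitely generated, a graded Nakayama argument reduces flatness of $R_K$ over $R_G$ to the single vanishing $\tor_1^{R_G}(R_K,\bk) = 0$; one would compute this from an explicit free resolution of $R_K$ over $R_G$, built from the two generators $1, x_K$ and the relations $x\cdot 1 = x\cdot x_K = 0$, using the $2$-periodic resolution $\cdots \xrightarrow{\,x\,} R_G \xrightarrow{\,x\,} R_G \xrightarrow{\,x\,} R_G \to R_G/(x)$ of $P_G$. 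I expect this to be the genuine obstacle: $R_G$ is not an integral domain — the exterior class $x$ is a zerodivisor and $R_G$ is not regular — so the freeness of $R_K$ over the polynomial subring $P_G$ does not pass formally to flatness over $R_G$ (note that $R_G$ is itself not flat over $P_G$ along the quotient $R_G \to P_G$), and the argument has to engage directly with how $x$ acts on $R_K$ and with the non-regular structure of $R_G$.
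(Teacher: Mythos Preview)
Your reduction to $p$-groups and your identification of the $R_G$-module structure of $R_K$ are both correct, and your instinct that the exterior class $x$ is the whole story is exactly right. But if you actually carry out the $\tor$ computation you outline, you will discover that the proposition as stated is \emph{false} whenever $K \subsetneq G$ and $|G| > 2$. Indeed, you have shown $R_K \cong P_G \oplus P_G[1]$ as $R_G$-modules with $x$ acting by zero; applying your own $2$-periodic resolution $\cdots \xrightarrow{x} R_G \xrightarrow{x} R_G \to P_G \to 0$ and tensoring with $\bk = R_G/(x,y)$ gives $\tor_i^{R_G}(P_G,\bk) \cong \bk$ for every $i \ge 0$, hence $\tor_1^{R_G}(R_K,\bk) \cong \bk^{2} \neq 0$. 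More structurally: $R_K$ is finitely generated over the Noetherian connected graded ring $R_G$, so flat would force free, yet the paper itself observes just before the proposition that $R_K$ is not free over $R_G$. These two sentences already contradict one another.

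The paper argues via the ideal criterion for flatness and claims an isomorphism $J \otimes_{P_G} R_K \xrightarrow{\ \sim\ } I \otimes_{R_G} R_K$, justified by ``$r_i \otimes m = q_i \otimes m$'' after writing $r_i = x_G p_i + q_i$. This is where the paper's argument breaks. Take $I = (x_G)$: then $q_1 = 0$, $J = 0$, and the claimed isomorphism would give $I \otimes_{R_G} R_K = 0$. But $x_G^{2} = 0$ means $(x_G) \cong R_G/(x_G) = P_G$ as $R_G$-modules, so
\[
I \otimes_{R_G} R_K \;\cong\; P_G \otimes_{R_G} R_K \;=\; R_K/x_G R_K \;=\; R_K \;\neq\; 0,
\]
and the element $x_G \otimes 1$ is a nonzero class mapping to $x_G \cdot 1 = 0$ in $R_K$. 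The slip is that one cannot ``move $x_G$ across the tensor'': in $I \otimes_{R_G} R_K$ the symbol $x_G$ sits on the $I$-side, and there is no element $1 \in I$ allowing the rewrite $x_G \otimes m = 1 \otimes x_G m$. So your hesitation at the final step is well founded; the obstacle you anticipated is genuine and in fact fatal to the statement, not just to the method.
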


\begin{proof}
	We use the following characterization of flat modules: Let $M$ be a module over a ring $R$. $M$ is a flat module over $R$ if and only if for any finitely generated ideal $I \subseteq R$, the map $I \otimes_R M \rightarrow R \otimes_R M \cong M$ is injective. Using this fact, we will show that $R_K$ is flat over $R_G$. From Proposition \ref{prop:2.1}, we can choose isomorphisms $R_G \cong \bk[x_G,y_G]$ and $R_K \cong \bk[x_K,y_K]$ so that the $R_G$-module structure on $R_K$ is given by $x_G \cdot 1 = 0$ and $y_G \cdot 1 = y_K$. 
	
	Let $I = (r_1,\ldots,r_n)$ be a finitely generated ideal in $R_G$. Notice that each $r_i$ can be written uniquely in the form $r_i = x_Gp_i + q_i$ where $p_i,q_i \in P_G \cong \bk[y_G]$. Consider $J$ the ideal in $P_G$ generated by the $q_i's$. Since $R_K$ is a free $P_G$-module, it is a flat module and thus the map $J \otimes_{P_G} R_K \rightarrow R_K$ is injective. Now observe that the map $I \otimes_{R_G} R_K \rightarrow R_K$ fits in a commutative diagram
	\[
	\begin{tikzcd}
		J \otimes_{P_G} R_K  \rar\dar{\cong} & R_K \arrow[d,equal] \\
		I \otimes_{R_G} R_K  \rar & R_K
	\end{tikzcd}
	\]
	The map on the left is an isomorphism since for any $m \in R_K$ and $i=1,\ldots,n$, we have that $r_i \otimes m = q_i \otimes m$.
\end{proof}

\section{Free and flat extension pairs}\label{sec:fep}

In \cite{semi} it was studied the equivariant cohomology for pairs of groups $(G,K)$ such that the cohomology of $BK$ is free over the cohomology of $BG$ (over an underlying field $\bk$) as a generalization of the situation when $G$ is a compact connected Lie group and $K$ a maximal torus in $G$. The so called \textit{free extension pairs} allows us to relate the $G$-equivariant cohomology and $K$-equivariant cohomology for a $G$-space $X$. In this section we generalize the notion of free extension pairs to build up a theory that relates the equivariant cohomology for cyclic in  later sections.

\begin{definition}
	Let $K \subseteq G$ be topological groups. We say that $(G,K)$ is a weak extension pair if there is a group $K \subseteq G \subseteq \widetilde{G}$ such that $(\widetilde{G}, G)$ and $(\widetilde{G},K)$ are free extension pairs. In this case, we say that $\tilde{G}$ extends $(G,K)$.
\end{definition}
Note that immediately from this definition we get that a free extension pair $(G,K)$ is a weak extension pair (extended by $G$).   We now provide a known example of weak extension pairs that it is not a free extension pair.

\begin{example}\label{weakex}
\normalfont
Let $G$ be a cyclic group and $K$ be a subgroup. Then $(G,K)$ is a weak extension pair, extended by $S^1$, since $R_G \cong  H^*(BS_1)   \otimes \Lambda_G$   and $R_K \cong H^*(BS^1) \otimes \Lambda_K$ (over a field of characteristic $p \divides |K|)$ where $\Lambda_G$ and $\Lambda_K$ are exterior algebras on one generator of degree 1. Therefore, $(S^1,G)$ and $(S^1,K)$ are free extension pairs. As we remarked in the last section, $R_K$ is flat over $R_G$ but not free.
\end{example}

Similar to the discussion in \cite{semi} for free extension pairs. we prove the following result that relates equivariant cohomology for weak extension pairs when the action can be extended to an action of the group $\widetilde{G}$.
\begin{theorem}\label{teo:2.2}
	Let $(G,K)$ be a free $\widetilde{G}$-extension pair and $X$ be a $G$-space. Assume that the action of $G$  on $X$ can be extended to an action of $\widetilde{G}$. There is a natural isomorphism of $H^*(BK)$-algebras
	$$ H_K(X;\bk) \cong H^*_G(X;\bk) \otimes_{H^*(BG)} H^*(BK;\bk)$$
\begin{proof}
Since each $(\widetilde{G}, G)$ and $(\widetilde{G}. K)$ are free extension pairs, the result follows from the sequence of isomorphisms
	\begin{align*}
		H_K(X;\bk) & \cong H^*_{\widetilde{G}}(X;\bk) \otimes_{H^*(B\widetilde{G})} H^*(BK;\bk) \\
		& \cong H^*_{\widetilde{G}}(X;\bk) \otimes_{H^*(B\widetilde{G})} H^*(BG;\bk) \otimes_{H^*(BG)} H^*(BK;\bk) \\
		& \cong H^*_{G}(X;\bk) \otimes_{H^*(BG)} H^*(BK;\bk)
	\end{align*}
\end{proof}
\end{theorem}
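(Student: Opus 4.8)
The plan is to reduce the statement to two applications of the defining property of a free extension pair. The content of the free-extension-pair theory of \cite{semi} is that whenever $(\widetilde{G},H)$ is a free extension pair and $Y$ is a $\widetilde{G}$-space, restriction of the action along $H\hookrightarrow\widetilde{G}$ induces a \emph{natural} isomorphism of $H^*(BH;\bk)$-algebras
$H^*_H(Y;\bk)\cong H^*_{\widetilde{G}}(Y;\bk)\otimes_{H^*(B\widetilde{G})}H^*(BH;\bk)$;
concretely this comes from a Leray--Hirsch argument applied to the bundle $E\widetilde{G}\times_H Y\to E\widetilde{G}\times_{\widetilde{G}}Y$ with fibre $\widetilde{G}/H$, which is the pullback of $\widetilde{G}/H\to BH\to B\widetilde{G}$ along the Borel classifying map, so that the freeness of $H^*(BH)$ over $H^*(B\widetilde{G})$ furnishes fibrewise-basis classes. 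The hypothesis that the $G$-action on $X$ extends to $\widetilde{G}$ is exactly what is needed so that $X$ may be viewed as a $\widetilde{G}$-space to which this isomorphism can be applied for both subgroups $G$ and $K$.

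I would then apply this isomorphism first to the free extension pair $(\widetilde{G},K)$ and the $\widetilde{G}$-space $X$, obtaining $H^*_K(X)\cong H^*_{\widetilde{G}}(X)\otimes_{H^*(B\widetilde{G})}H^*(BK)$. Since $K\subseteq G\subseteq\widetilde{G}$, the restriction $H^*(B\widetilde{G})\to H^*(BK)$ factors through $H^*(BG)$, so the $H^*(B\widetilde{G})$-module $H^*(BK)$ is canonically $H^*(BG)\otimes_{H^*(BG)}H^*(BK)$; inserting this factor and regrouping by associativity of the tensor product yields $H^*_{\widetilde{G}}(X)\otimes_{H^*(B\widetilde{G})}H^*(BG)\otimes_{H^*(BG)}H^*(BK)$. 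The leftmost two tensor factors are identified with $H^*_G(X)$ by the same isomorphism applied to the free extension pair $(\widetilde{G},G)$, and chaining the three identifications produces $H^*_K(X)\cong H^*_G(X)\otimes_{H^*(BG)}H^*(BK)$.

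The points requiring care, and where essentially all of the work resides, are: (i) that the free-extension-pair isomorphism is multiplicative (an $H^*(BH)$-algebra map) and natural in the $\widetilde{G}$-space, so that the two instances may be composed and the resulting isomorphism respects algebra structures and is natural in $X$ — both are built into the statement imported from \cite{semi}, the Leray--Hirsch isomorphism there being a ring isomorphism and visibly functorial; and (ii) that the middle regrouping step needs no flatness hypothesis, since $A\otimes_C B\otimes_B D\cong A\otimes_C D$ holds unconditionally. This last point is what makes the argument valid even though, in the cyclic case that motivates the paper, $H^*(BK)$ is merely flat (not free) over $H^*(BG)$: the flatness is not used here but becomes relevant only later. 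Thus the main obstacle is not internal to this proof at all — it is having the free-extension-pair machinery of \cite{semi} in hand — and once that is granted the argument is precisely the short composition above. One could alternatively argue directly via Leray--Hirsch on $E\widetilde{G}\times_K X\to E\widetilde{G}\times_G X$ (fibre $G/K$) by pulling back Leray--Hirsch data for $BK\to BG$, but the composition route is cleaner and avoids re-deriving the fibre computation.
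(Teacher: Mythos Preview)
Your proposal is correct and follows essentially the same route as the paper's own proof: apply the free-extension-pair isomorphism from \cite{semi} to $(\widetilde{G},K)$, insert the tautological factorisation $H^*(BK)\cong H^*(BG)\otimes_{H^*(BG)}H^*(BK)$, and then apply the same isomorphism to $(\widetilde{G},G)$ to collapse the leftmost tensor. Your additional commentary on naturality, multiplicativity, and the fact that the regrouping step needs no flatness hypothesis is accurate and simply makes explicit what the paper leaves implicit.
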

Another notion that relates the cohomology of a group and a subgroup of it can be derived from free extension pairs is the following.
\begin{definition}
	Let $K \subseteq G$ be groups. We say that $(G,K)$ is a flat extension pair over $\bk$ if $R_K$ is a flat $R_G$-module.
\end{definition}

As an open question, it is yet to be determined what is the relation between weak extension pairs and flat extension pairs. Notice that as a consequence of Proposition \ref{prop2.3} and Example \ref{weakex}, the two notions are equivalent for cyclic groups. On the other hand, in a purely algebraic set up, the two notions are not comparable. For example, the inclusions $R = \bk[x^2,y^2] \rightarrow S = \bk[x^2, xy, x^2] \rightarrow T = \bk[x,y]$ make $T$ and $S$ free $R$-modules but $T$ is not flat over $S$.

\begin{theorem}
	Let  $(G,K)$ be a flat extension pair and $X$ be a $G$-space. Suppose that $G$-acts trivially on the cohomology of $X$. There is an isomorphism of $R_K$-algebras
	\[ H^*_K(X) \cong H^*_G(X) \otimes_{R_G} R_K \]
\end{theorem}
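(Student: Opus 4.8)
The plan is to compare the Leray--Serre spectral sequences of the Borel fibrations $X\to EG\times_G X\to BG$ and $X\to EK\times_K X\to BK$ through the map of fibrations induced by the inclusion $K\subseteq G$, and to use flatness of $R_K$ over $R_G$ to identify the second spectral sequence as the base change of the first along the restriction $R_G\to R_K$.

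First I would observe that, since $G$ acts trivially on $H^*(X)$, so does the subgroup $K$; hence both spectral sequences have untwisted coefficients, $E_2^{*,*}(G)=R_G\otimes_\bk H^*(X)$ and $E_2^{*,*}(K)=R_K\otimes_\bk H^*(X)$, and the comparison map induced by $K\hookrightarrow G$ is on $E_2$ the map $(\text{restriction }R_G\to R_K)\otimes\mathrm{id}$. In other words $E_2(K)=R_K\otimes_{R_G}E_2(G)$, with the comparison map being $e\mapsto 1\otimes e$. I would then induct on $r$: the differential $d_r^G$ is $R_G$-linear (the Serre spectral sequence from $E_2$ onward is a spectral sequence of modules over the cohomology of the base), so $\mathrm{id}_{R_K}\otimes d_r^G$ is a well-defined $R_K$-linear endomorphism of $R_K\otimes_{R_G}E_r(G)$, and by naturality of the comparison map it agrees with $d_r^K$ on the $R_K$-module generators $1\otimes e$; hence $d_r^K=\mathrm{id}_{R_K}\otimes d_r^G$. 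Since $(G,K)$ is a flat extension pair, $R_K\otimes_{R_G}(-)$ is exact and therefore commutes with homology, giving $E_{r+1}(K)=R_K\otimes_{R_G}E_{r+1}(G)$ compatibly with the comparison map; as the spectral sequences are first-quadrant this passes to the limit, so $E_\infty(K)=R_K\otimes_{R_G}E_\infty(G)$.

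Next I would lift this from associated graded objects to the cohomology rings. Let $\phi\colon H^*_G(X)\otimes_{R_G}R_K\to H^*_K(X)$ be the $R_K$-algebra map determined by the restriction ring homomorphism $H^*_G(X)\to H^*_K(X)$ together with the structure map $R_K\to H^*_K(X)$; these have commuting images because $H^*_K(X)$ is graded-commutative and both restrict compatibly from $R_G$, so $\phi$ is well defined. Filter $H^*_G(X)\otimes_{R_G}R_K$ by $\tilde F^p:=\im\big(F^pH^*_G(X)\otimes_{R_G}R_K\to H^*_G(X)\otimes_{R_G}R_K\big)$, where $F^\bullet$ is the Serre filtration; flatness makes this map injective and identifies $\mathrm{gr}^p\bigl(H^*_G(X)\otimes_{R_G}R_K\bigr)\cong E_\infty^{p,*}(G)\otimes_{R_G}R_K$. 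Since the restriction map preserves filtrations and both Serre filtrations are multiplicative, $\phi$ preserves filtrations, and on associated graded it becomes the base-changed comparison map $E_\infty^{p,*}(G)\otimes_{R_G}R_K\xrightarrow{\ \sim\ }E_\infty^{p,*}(K)$ of the previous step, which is an isomorphism. Both filtrations are finite in each total degree (first-quadrant), so a standard induction on filtration length together with the five lemma shows $\phi$ is an isomorphism of $R_K$-algebras.

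The step I expect to be the main obstacle is the bookkeeping in the inductive comparison of pages: one must be sure that the comparison map of spectral sequences remains literally the canonical map $e\mapsto 1\otimes e$ at every stage, that the Serre differentials are honestly $R_G$-linear so that forming $\mathrm{id}\otimes d_r^G$ is legitimate, and, at the end, that $\mathrm{gr}\,\phi$ really is the isomorphism supplied by flatness rather than merely some isomorphism --- this last point requires tracking the multiplicative structure through all the identifications. Flatness of $R_K$ over $R_G$ enters twice and is essential: once to commute $R_K\otimes_{R_G}(-)$ past the homology of each page, and once to compute the associated graded of $H^*_G(X)\otimes_{R_G}R_K$.
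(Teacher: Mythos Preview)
Your argument is correct, but the paper takes a different and considerably shorter route. Instead of comparing Serre spectral sequences page by page, the paper observes that the pullback square
\[
\begin{tikzcd}
X_K \rar\dar & X_G\dar \\
BK \rar & BG
\end{tikzcd}
\]
with trivial local coefficients gives an Eilenberg--Moore spectral sequence $E_2=\tor_{R_G}(H^*_G(X),R_K)\Rightarrow H^*_K(X)$; flatness of $R_K$ over $R_G$ kills all higher $\tor$, so the spectral sequence is concentrated in the zeroth column and the isomorphism $H^*_K(X)\cong H^*_G(X)\otimes_{R_G}R_K$ drops out immediately, with no extension problems to resolve. Your approach trades this single invocation of Eilenberg--Moore for an inductive comparison of Serre pages plus a filtration argument: flatness is used twice, first to commute $R_K\otimes_{R_G}(-)$ past the homology of each page, then to identify the associated graded of the tensor product. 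The payoff is that you only need the Serre spectral sequence and basic flat-base-change algebra, whereas the paper's proof is a one-liner once the Eilenberg--Moore machinery (and its convergence under the trivial-action hypothesis) is granted. The bookkeeping you flag as the main obstacle---$R_G$-linearity of the Serre differentials, naturality of the comparison map at each page, and that $\mathrm{gr}\,\phi$ is the specific isomorphism coming from flatness---is genuine but routine, and you have handled it correctly.
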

\begin{proof}
	Since the local coefficient system is trivial, the pullback diagram
	\[ 
	\begin{tikzcd}
		X_K\rar\dar & X_G\dar \\
		BK \rar & BG 
	\end{tikzcd},
	\]
	give rise to the Eilenberg-Moore spectral sequence $E_2 = \tor_{R_G}(H^*_G(X),R_K) \Rightarrow H^*_K(X)$. Since $R_K$ is flat over $R_G$, the spectral sequence is concentrated in the zeroth-column and thus $H^*_K(X) \cong H^*_G(X) \otimes_{R_G} R_K$.
\end{proof}

\section{The module structure over \texorpdfstring{$R_G$}{R\_G} and \texorpdfstring{$P_G$}{P\_G}}\label{se:mod}

Let $G = \langle \sigma \rangle $ be a cyclic group of order $n$ and $X$ be a $G$-space. The equivariant cohomology for elementary abelian $p$-groups over a field of characteristic $p$ has been studied by \cite{ptori} for $p$ any prime and in \cite{2tori}, \cite{tori-yu} for $p=2$ where some results carry over cyclic groups of any order and a field of positive characteristic.  

We will now study the module structure of the $G$-equivariant cohomology of a $G$-space $X$ as a module over $R_G$. As a consequence of Theorem \ref{teo:2.2} and Proposition \ref{prop2.3}, we have that if $G$-acts trivially on the cohomology of $X$ there is an isomorphism
\[ H^*_K(X) \cong H^*_G(X) \otimes_{R_G} R_K. \]
The result also holds if the action of $G$ on $X$ can be extended to a group $\widetilde{G}$ such that $(\widetilde{G}, G)$ and $(\widetilde{G},K)$ are free extension pairs. For example, $\widetilde{G} = S^1$.

\begin{example}
	\normalfont
	Let $X = S^1$, $G = C_4$ and $K = C_2$. Consider the action of $G = \langle\sigma\rangle$ on $X$ given by $\sigma\cdot x = \zeta_4^2x$ where $\zeta_4$ is a $4$th-root of unity. Notice that the action of $K = \langle \sigma^2 \rangle$ on $X$ is trivial and thus $H^*_K(X) \cong R_K \otimes H^*(X)$. 
	On the other hand, notice that this action is the restriction of the action of $\widetilde{G} = S^1$ on $X$ given by $g\cdot x = g^2z$. This action is transitive and we have that $H^*_{\widetilde{G}}(X) = R_K$. Since $(\widetilde{G}, G)$ and $(\widetilde{G},K)$ are free extension pairs, by Theorem \ref{teo:2.2} it follows that
	\[ H^*_G(X) \cong H^*_{\widetilde{G}}(X) \otimes_{R_{\widetilde{G}}} R_G \cong R_G \oplus R_G[1] \]
	and
	\[ H^*_K(X) \cong H^*_{\widetilde{G}}(X) \otimes_{R_{\widetilde{G}}} R_K \cong R_K \oplus R_K[1].\]
\end{example}

Let $b(X) = \sum_{k\geq 0} \dim_{\F_p} H^k(X;\F_p)$. We recall the following criterion for equivariant formality for cyclic groups of order prime as a consequence of the localization theorem.

\begin{theorem}
Let $p$ be a prime number and $K$ be a cyclic group of order $p$ , and  let $X$ be a $K$-space. Let $X^K$ denote the fixed point subspace of $X$. Then $X$ is $K$-equivariantly formal over $\F_p$ if and only if $b(X) = b(X^K)$.
\end{theorem}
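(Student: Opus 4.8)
The plan is to use the Borel localization theorem for the elementary abelian $p$-group $K = C_p$ acting on $X$ over the field $\F_p$, together with the comparison of total Betti numbers. First I would recall that equivariant formality is equivalent to $H^*_K(X;\F_p)$ being a free $R_K$-module together with the action of $K$ on $H^*(X;\F_p)$ being trivial, but here the cleaner route is through the localized equivariant cohomology. Let $S \subseteq R_K$ be the multiplicative set generated by the degree-$2$ polynomial generator $y$ (so $S^{-1}R_K = S^{-1}P_K$, since inverting $y$ kills the nilpotent contribution of $x$ in an appropriate sense; more precisely $S^{-1}R_K$ is a graded field-like ring of Krull dimension $0$ after passing to the even part). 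The localization theorem states that the restriction map induces an isomorphism
\[
S^{-1}H^*_K(X;\F_p) \;\xrightarrow{\ \cong\ }\; S^{-1}H^*_K(X^K;\F_p) \cong S^{-1}R_K \otimes_{\F_p} H^*(X^K;\F_p).
\]
In particular the rank of $H^*_K(X;\F_p)$ as a module over the polynomial part $P_K$ (counting the exterior factor appropriately) is governed by $b(X^K)$.

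Next I would run the Serre spectral sequence of the fibration $X \to EK\times_K X \to BK$. Its $E_2$-page is $H^*(BK;H^*(X;\F_p))$, and by a standard Euler-characteristic-type count (or the fact that each successive page has total dimension over $\F_p$, in each fixed internal-plus-base filtration degree, no larger than its predecessor) one gets the inequality that the $P_K$-rank of $H^*_K(X;\F_p)$ is at most $b(X)$, with equality precisely when all differentials vanish, i.e. precisely when $X$ is $K$-equivariantly formal. Combining this with the localization computation, which identifies that same rank with $b(X^K)$ whenever we are free (and in general gives $\mathrm{rank} \le b(X)$ and $\mathrm{rank} = b(X^K)$ after localizing), yields the chain $b(X^K) = \mathrm{rank}_{P_K} H^*_K(X;\F_p) \le b(X)$, always, with equality $b(X)=b(X^K)$ iff the spectral sequence degenerates iff $X$ is $K$-equivariantly formal. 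The two implications then fall out: if $X$ is equivariantly formal the rank is exactly $b(X)$ and also exactly $b(X^K)$, forcing $b(X)=b(X^K)$; conversely if $b(X)=b(X^K)$ then the rank count is squeezed to its maximum, so no differentials can occur and $H^*_K(X;\F_p)$ is free over $R_K$, giving equivariant formality.

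The main obstacle I anticipate is making the localization step fully precise when $R_K = \F_p[x,y]/(x^2)$ is not a domain (the case $p$ odd or $p=2,n>2$), so that "rank" must be interpreted carefully — one should localize at the prime ideal $(x)$, whose quotient/localization is the graded field of fractions of $P_K \cong \F_p[y]$, and check that $S^{-1}H^*_K(X^K)$ is free of rank $b(X^K)$ over it, keeping track of the fact that the exterior generator $x$ acts nilpotently but contributes a factor of $2$ that must be reconciled with the corresponding factor on the $BK$ side. Once the bookkeeping of these factors of $2$ is handled consistently on both the localization side and the Serre spectral sequence side, the argument is a direct dimension count; I would present the $p=2$, $n=2$ case (where $R_K=\F_2[t]$ is a genuine polynomial ring and the count is transparent) first or in parallel to motivate the general bookkeeping.
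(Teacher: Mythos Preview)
The paper does not actually prove this theorem: it is stated without proof, introduced as a result that the authors ``recall \ldots\ as a consequence of the localization theorem,'' with the standard references (Quillen, tom~Dieck) in the background. So there is no paper proof to compare against; your outline is precisely the classical argument one finds in those references.

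Your plan is correct in spirit, but one step is stated imprecisely and would not go through as written. You claim that the $P_K$-rank of $H^*_K(X;\F_p)$ is at most $b(X)$ ``with equality precisely when all differentials vanish, i.e.\ precisely when $X$ is $K$-equivariantly formal.'' Both halves of this need adjustment. First, degeneration of the Serre spectral sequence is not the same as equivariant formality: if $K$ acts nontrivially on $H^*(X;\F_p)$ the edge map lands in $H^*(X)^K \subsetneq H^*(X)$ and cannot be surjective, regardless of whether differentials vanish. Second, and relatedly, when the $K$-action on $H^*(X;\F_p)$ is nontrivial the $E_2$-page itself already has stable diagonal dimension strictly smaller than $b(X)$: for $i>0$ one has $\dim_{\F_p} H^i(K;H^j(X))$ equal to the number of non-free indecomposable $\F_p[K]$-summands of $H^j(X)$, which is $\le \dim H^j(X)$ with equality only for the trivial module. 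Thus equality in your rank inequality can fail even with all differentials zero.

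The fix is straightforward and is what the classical proof actually does: compare dimensions in a fixed large total degree $k>\dim X$. Localization gives $\dim_{\F_p} H^k_K(X)=\dim_{\F_p} H^k_K(X^K)=b(X^K)$. The spectral sequence gives
\[
\dim_{\F_p} H^k_K(X)\;\le\;\sum_{i+j=k}\dim_{\F_p} E_2^{i,j}\;=\;\sum_j \dim_{\F_p} H^{k-j}(K;H^j(X))\;\le\;\sum_j \dim_{\F_p} H^j(X)\;=\;b(X),
\]
and equality throughout forces \emph{both} that each $H^j(X)$ is a trivial $K$-module \emph{and} that the spectral sequence degenerates. Together these are exactly equivariant formality. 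With this correction your bookkeeping worry about the exterior generator $x$ disappears as well, since one is simply counting $\F_p$-dimensions in a single degree rather than ranks over $P_K$.
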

Equivariant formality for cyclic groups of any order can be also characterized in terms of $p$-groups as we show in the following theorem as a particular case of the Localization theorem in equivariant cohomology \cite{Quil}

\begin{theorem}[Localization Theorem for Cyclic Groups]\label{teo:2.1}
	Let $G$ be a cyclic group of order $n$, $p$ be a prime $p\mid n$ and let $X$ be a $G$-space. Denote by $K$ the (unique) subgroup of $G$ of order $p$. Then the inclusion map $X^{K} \rightarrow X$ induces a map $H^*_G(X) \rightarrow H^*_G(X^{K})$ which is an isomorphism after localization under $S = P_G\setminus\{0\}$.
\end{theorem}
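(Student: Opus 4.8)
The plan is to run the standard Quillen-type localization argument, adapted to the cyclic setting: the essential point is that outside the fixed locus $X^{K}$ every isotropy group of the $G$-action has order prime to $p$, and such isotropy contributes nothing to $H^{*}_{G}$ after inverting $S$.

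First I would record the relevant group theory. Write $n = p^{a}m$ with $p \nmid m$; then $K$ is the unique subgroup of $G$ of order $p$, and for any subgroup $H \subseteq G$ one has $K \subseteq H$ if and only if $p \mid |H|$. Since $G$ is abelian, $K$ is normal, so $X^{K}$ is $G$-invariant; taking $X$ to be a finite $G$-CW complex, $X^{K}$ is a $G$-subcomplex, $(X,X^{K})$ is a finite $G$-CW pair, the inclusion $X^{K}\hookrightarrow X$ is a $G$-map inducing the asserted restriction $H^{*}_{G}(X)\to H^{*}_{G}(X^{K})$, and every relative $G$-cell has orbit type $G/H$ with $K\not\subseteq H$, i.e.\ with $p\nmid|H|$.

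Next I would analyze the contribution of these orbit types. If $p\nmid|H|$, then $|H|$ is invertible in $\bk=\F_p$, so $H^{*}(BH;\bk)\cong\bk$ is concentrated in degree $0$ and the restriction $R_{G}=H^{*}(BG;\bk)\to H^{*}_{G}(G/H)\cong H^{*}(BH;\bk)$ kills $H^{>0}(BG;\bk)$; in particular it sends the degree-$2$ polynomial generator $y$ of $P_{G}$ to $0$. Building $X/X^{K}$ from its finitely many $G$-cells (all of orbit type $G/H$ with $p\nmid|H|$) and using the cofiber sequence $X^{K}_{+}\to X_{+}\to X/X^{K}$, we see that $H^{*}_{G}(X,X^{K};\bk)\cong\widetilde{H}^{*}_{G}(X/X^{K};\bk)$ carries a finite filtration whose subquotients are subquotients of the modules $H^{*}(BH_{\alpha};\bk)$ with $R_{G}$ acting through restriction; since $y$ acts as $0$ on each subquotient, $y$ acts nilpotently on $H^{*}_{G}(X,X^{K};\bk)$, so this module is $S$-torsion ($y\in S=P_{G}\setminus\{0\}$, which is multiplicative since $P_{G}$ is a polynomial domain) and hence $S^{-1}H^{*}_{G}(X,X^{K};\bk)=0$.

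Finally I would apply the exact functor $S^{-1}(-)$ to the long exact sequence of the pair $(X,X^{K})$ in equivariant cohomology: the relative terms vanish after localization, so $S^{-1}H^{*}_{G}(X;\bk)\to S^{-1}H^{*}_{G}(X^{K};\bk)$ is an isomorphism, as claimed. The step requiring the most care is the middle one — identifying $H^{*}_{G}(X,X^{K})$ via the relative $G$-cells and checking that $R_{G}$ acts on each orbit-cell contribution $H^{*}(BH_{\alpha})$ through the restriction map — but this is routine bookkeeping for finite $G$-CW complexes and is exactly the computation underlying Quillen's localization theorem \cite{Quil}.
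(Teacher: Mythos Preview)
Your argument is correct and is essentially the same Quillen localization argument the paper uses: both hinge on the fact that for $x\notin X^{K}$ the stabilizer $G_{x}$ has order prime to $p$, so the restriction $R_{G}\to H^{*}(BG_{x};\bk)$ kills $P_{G}^{>0}$ and the contribution of such orbits vanishes after inverting $S$. The only difference is packaging: the paper invokes Quillen's general localization theorem as a black box and then identifies the support locus $X^{S}=X^{K}$ via Proposition~\ref{prop:2.1}, whereas you unwind that black box directly via the $G$-cell filtration of $(X,X^{K})$ and the long exact sequence of the pair --- a slightly more self-contained but equivalent route.
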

\begin{proof}
	For $x \in X$, let $G_x$ be the stabilizer subgroup of $x$, and let $j_x\colon G_x \rightarrow G$ be the inclusion. For a multiplicative subgroup $S \subseteq R_G$, let $X^S = \{ x \in X : j^*_x(s) \neq 0\, \forall s \in R_G \}$. The localization theorem in equivariant cohomology implies that the inclusion map $X^S \rightarrow X$ induces an isomorphism $S^{-1}H^*_G(X) \rightarrow S^{-1}H^*_G(X^S)$. 
	
	To prove our theorem, it is enough to show that $X^S = X^{K}$ when $S = P_G\setminus\{0\}$.
	
	By Proposition \ref{prop:2.1}, the map $j^*_x\restriction_S$ is injective if and only if $p \mid |G_x|$. Otherwise, the map is zero. 
	
	Consider the commutative diagram 
	\[
	\begin{tikzcd}
		R_G \rar{j^*_x}\dar{r} & R_{G_x}\dar \\
		R_{K} \rar{i_x^*} & R_{K_x}  
	\end{tikzcd}
\]
restricted to $S \subseteq P_G$.  Since the map $r\restriction_S$ is injective by Proposition \ref{prop:2.1}, we have that
\begin{align*}
x \in X^{K} &\Leftrightarrow K \subseteq G_x \\
&\Leftrightarrow p \mid |G_x| \\
&\Leftrightarrow j_x^*\restriction S\; \text{is injective}\\
&\Leftrightarrow x \in X^S
\end{align*}
\end{proof}
Using the cohomological spectral sequence arisen form the fibration $X \rightarrow X_G \rightarrow BG$ and an argument similar to the prime case \cite[Ch.III. prop. 4.16]{tomdieck}, we obtain the following consequence of the localization theorem \ref{teo:2.1}
\begin{corollary}\label{cor:3.7}
	Let $X$ be a $G$-space. Then  $X$ is $G$-equivariantly formal if and only if $X$ is $K$-equivariantly formal and $X^K$ is $G$-equivariantly formal.
\end{corollary}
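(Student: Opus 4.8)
The plan is to mimic the proof of the prime-order case, \cite[Ch.~III, Prop.~4.16]{tomdieck}, with Theorem~\ref{teo:2.1} playing the role of the elementary $C_p$-localization theorem. The reformulation I use throughout is that, since the image of the fibre restriction $\rho_Y\colon H^*_G(Y)\to H^*(Y)$ always lands in the invariants $H^*(Y)^G$, a $G$-space $Y$ is $G$-equivariantly formal if and only if $\rho_Y$ is surjective, equivalently if and only if the Serre spectral sequence of $Y\to Y_G\to BG$ has trivial local system and collapses at $E_2$ (equivalently $H^*_G(Y)\cong R_G\otimes H^*(Y)$ as $R_G$-modules); I apply this to $Y=X$ and to $Y=X^K$. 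With this in hand the implication ``$X$ is $G$-equivariantly formal $\Rightarrow$ $X$ is $K$-equivariantly formal'' is immediate, since $\rho_X$ factors through $H^*_K(X)$; in particular $b(X)=b(X^K)$ by the prime-order criterion and $G$ acts trivially on $H^*(X)$.

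For the remaining content of both directions I work over $P_G\cong\bk[y]$, a principal ideal domain, and set $S=P_G\setminus\{0\}$. Granting for the moment that $G$ acts trivially on $H^*(X^K)$, the $E_2$-page of the Serre spectral sequence of $(X^K)_G$ is $H^*(BG)\otimes H^*(X^K)$, which is $P_G$-free of rank $2\,b(X^K)$, and all its differentials are $P_G$-linear. On the other hand, Theorem~\ref{teo:2.1} gives $S^{-1}H^*_G(X^K)\cong S^{-1}H^*_G(X)$, and when $X$ is $G$-equivariantly formal the latter is $S^{-1}R_G$-free of rank $b(X)=b(X^K)$, hence of $\bk(y)$-dimension $2\,b(X^K)$, exactly the $\bk(y)$-dimension of $S^{-1}E_2$. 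Since $\dim_{\bk(y)}S^{-1}E_\infty=\dim_{\bk(y)}S^{-1}E_2$, every $S^{-1}d_r$ vanishes; as $E_2$ is $y$-torsion-free this forces $d_2=0$, whence $E_3=E_2$, and inductively $d_r=0$ on every page. Thus the spectral sequence collapses and the edge map $\rho_{X^K}$ is onto, i.e.\ $X^K$ is $G$-equivariantly formal. The converse implication is the mirror argument: $K$-equivariant formality of $X$ yields $b(X)=b(X^K)$, and $G$-equivariant formality of $X^K$ gives $H^*_G(X^K)\cong R_G\otimes H^*(X^K)$, so $\dim_{\bk(y)}S^{-1}H^*_G(X)=2\,b(X)$ matches the $\bk(y)$-dimension of the $E_2$-page $H^*(BG)\otimes H^*(X)$, and the same torsion-over-a-PID argument forces the spectral sequence of $X_G$ to collapse, hence $H^*_G(X)\cong R_G\otimes H^*(X)$.

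The point that still has to be supplied --- and what I expect to be the main obstacle --- is the input, used in both directions, that the monodromy $G$-action on $H^*(X^K)$ is trivial precisely when the one on $H^*(X)$ is. I will obtain this by transporting triviality across localization. Conjugation by $G$ makes $H^*_K(X)$ and $H^*_K(X^K)$ into $\bk[G/K]$-modules (elements of $K$ act trivially, being inner) on which $R_K$ acts $(G/K)$-linearly, and the restriction $S^{-1}H^*_K(X)\to S^{-1}H^*_K(X^K)$ is a $(G/K)$-equivariant isomorphism of modules over the \emph{local} ring $S^{-1}R_K\cong\bk(y)\otimes\Lambda_K$. Using that $X$ is $K$-equivariantly formal and $X^K$ trivially so, passing to residue-field fibres identifies both sides, naturally and hence $(G/K)$-equivariantly, with $\bk(y)\otimes_\bk H^*(X)$ and $\bk(y)\otimes_\bk H^*(X^K)$; thus the $\bk[G/K]$-modules $H^*(X)$ and $H^*(X^K)$ become isomorphic after $\otimes_\bk\bk(y)$, and one is trivial if and only if the other is. Since $K$ acts trivially on both (by $K$-equivariant formality of $X$, resp.\ because $K$ fixes $X^K$), this gives exactly the equivalence ``$G$ trivial on $H^*(X)$'' $\iff$ ``$G$ trivial on $H^*(X^K)$'' needed to launch the comparison in either direction.
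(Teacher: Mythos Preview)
Your overall plan---adapt tom Dieck's argument using Theorem~\ref{teo:2.1} in place of the $C_p$-localization theorem---is exactly what the paper intends. The rank comparison over $P_G$ is also fine once one observes that for a finitely generated graded $\bk[y]$-module the $P_G$-rank equals $\dim_\bk H^k+\dim_\bk H^{k+1}$ for $k\gg 0$, so $\operatorname{rank}_{P_G}E_\infty=\operatorname{rank}_{P_G}H^*_G$ is automatic.

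The genuine gap is in the auxiliary lemma on monodromy. You assert that ``passing to residue-field fibres identifies both sides, \emph{naturally} and hence $(G/K)$-equivariantly, with $\bk(y)\otimes_\bk H^*(X)$''. There is no such natural identification: the Leray--Hirsch isomorphism $H^*_K(X)\cong R_K\otimes H^*(X)$ depends on a choice of section of the edge map, and once you invert $y$ the edge map $H^*_K(X)\to H^*(X)$ becomes the zero map, so there is no canonical comparison left. What \emph{is} natural is the Serre filtration, whose associated graded is $(G/K)$-equivariantly $R_K\otimes H^*(X)$; but when $p\mid |G/K|$ a filtered $\bk[G/K]$-module with trivial associated graded need not itself be trivial, so you cannot recover the $(G/K)$-structure on the residue fibre from this. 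Without the lemma your torsion-freeness step also collapses: if the local system on $H^*(X^K)$ is nontrivial then $E_2^{*,q}$ carries $y$-torsion equal to $N\!\cdot H^q(X^K)$, so ``$E_2$ torsion-free $\Rightarrow d_2=0$'' is unavailable.

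The remedy is simpler than the lemma: drop localization and count $\bk$-dimensions in a fixed large degree $k$, exactly as in tom Dieck. For any $G$-module $M$ one has $\dim_\bk H^p(G;M)\le\dim_\bk M$, with equality for every $p$ if and only if $G$ acts trivially on $M$. Hence for $k\ge\dim Y$,
\[
\dim_\bk H^k_G(Y)\;=\;\sum_{p+q=k}\dim_\bk E_\infty^{p,q}\;\le\;\sum_{p+q=k}\dim_\bk H^p(G;H^q(Y))\;\le\;b(Y),
\]
with equality throughout precisely when $Y$ is $G$-equivariantly formal. Now Theorem~\ref{teo:2.1} gives $\dim_\bk H^k_G(X)=\dim_\bk H^k_G(X^K)$ for $k\gg 0$, and $K$-formality of $X$ gives $b(X)=b(X^K)$; plugging $Y=X$ and $Y=X^K$ into the displayed chain yields both implications at once, and the triviality of the monodromy falls out as part of the equality case rather than needing a separate lemma.
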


From these results we can see that if $X$ is a $G$-space such that $X^G = X^K$, then $X$ is $G$-equivariantly formal if and only if $b(X^K) = b(X)$. However, there is no general characterization of the $G$-equivariant formality of $X$ in terms of its Betti numbers. The next example shows that even if $b(X) = b(X^K)$, the $G$-equivariant cohomology of $X$ fails to be free over $R_G$.
\begin{example}\label{ex: ex1}
\normalfont
Let $G = \langle \sigma \mid \sigma^4 = e \rangle$ and let $\sigma$ act on $X = S^1 \vee S^1$ as the reflection about the base point $p \in X$. The action has a single fixed point  the base point $p \in X$. Consider cohomology with coefficients in $\F_2$.
First notce that the subgroup $K = \langle \sigma^2 \rangle$ acts trivially on $X$ and thus $b(X) =b(X^K)$ and so $H_K(X) \cong H^*(BK) \otimes H^*(X)$. Now we use the equivariant Mayer-Vietoris exact sequence to compute $H^*_G(X)$. Let $U,V$ be the covering such that $U,V$ are $G$-contractibles to $p$, and $U\cap V$ is $G$-homotopic to $p \sqcup G/K$. We have then that $H^*_G(U) \cong H^*_G(V) \cong R_G$ and $H^*_G(U\cap V) \cong R_G \oplus R_K$. The restriction map $\rho\colon H^*_G(U) \rightarrow H^*_G(U\cap V)$ is given by $\alpha \mapsto (\alpha, r(\alpha))$ where $r\colon R_G \rightarrow R_K$ is induced by the inclusion of $K$ into $G$.
The Mayer-Vietoris long exact sequence induces a split exact sequence of $R_G$-modules
\[ 0 \rightarrow \coker(\rho + \rho)[1] \rightarrow H^*_G(X) \rightarrow \ker(\rho + \rho) \rightarrow 0 \]
As $\ker(\rho, \rho) \cong R_G$ and $\coker(\rho,\rho) \cong R_K$, we have that $H^*_G(X) \cong R_G \oplus R_K[1]$ as $R_G$-modules.  Since $R_K$ is not free over $R_G$, then $H^*_G(X)$ is not free over $R_G$ and so $X$ is not $G$-equivariantly formal.
\end{example}
\begin{remark}
\normalfont
In the previous example, if $R_K[1]$ is identified with $\F_2[\tilde{t}]u$ and $u$ is a generator of degree $1$ and a lifting of the generator $H^1(X)^G$, we have that the canonical map
\[ \Phi: H^*_G(X) \otimes_{R_G} R_K \rightarrow H^*_K(X)\]

satisfies that $\ker(\Phi) = (\tilde{t}u \otimes 1 - u \otimes t)$ and $\im(\Phi) = R_K \otimes H^*(X)^G$. This opens the following question:
{\textit{Let $X$ be a $G$-space such that $X$ is $K$-equivariantly formal. Is the image of the map canonical map of $R_G$-modules }}
\[  \Phi: H^*_G(X) \otimes_{R_G} R_K \rightarrow H^*_K(X) \cong R_K \otimes H^*(X)\]
$R_K \otimes H^*(X)^G$ ?
\end{remark}

Now we explore the module structure of $H^*_G(X;\F_p)$ over $P_G \cong \bk[y] \subseteq R_G$. As it was studied in \cite{ptori} for the prime case, the $G$-equivariant cohomology of a space might be free over $P_G$ and fails to be free over $R_G$, and the same does happen for the general cyclic group case.  Notice that the $G$-equivariant cohomology of the space from Example \ref{ex: ex1} is not free over $R_G$ , and  it is free as a module  over  $P_G$ by Corollary \ref{cor:2.1}. 

\medskip

To characterize the freeness over $P_G$, one can define an associated complex - \textit{The Atiyah-Bredon sequence}  $AB_G^*(X)$ -  to capture the freeness of $H^*_G(X)$ over $P_G$ \cite[Thm. 8.5]{ptori}. As an alternative result, we prove the following characterization that might be useful for computational purposes.

\begin{theorem}\label{teo3:2}
	Let $G = \langle \sigma \rangle$ be a cyclic group of order $n$ and $\bk$ be a field of characteristic $p \mid n$. Let $X$ be a $G$-space such that $(I + \sigma + \cdots + \sigma^{n-1})H^*(X) = 0$ and that $\ker(\sigma - I) \subseteq \im(\sigma - I)$. If the Serre spectral sequence of the Borel construction of $X$ degenerates at $E_2$, then $H^*_G(X)$ is a free $P_G$-module.
\end{theorem}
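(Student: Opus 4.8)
The plan is to reduce the claim to the injectivity of multiplication by the degree-two generator $y$ of $P_G$, and to extract that injectivity from the degenerate Serre spectral sequence, the hypothesis $NH^*(X)=0$ on the norm $N = I+\sigma+\cdots+\sigma^{n-1}$ being used precisely to control its bottom row. For the reduction: since $X$ is a finite $G$-CW complex, $H^*_G(X)$ is finitely generated over $R_G$, and $R_G$ is a finitely generated $P_G$-module (free of rank two, by Corollary \ref{cor:2.1}), so $H^*_G(X)$ is a finitely generated graded module over $P_G\cong\bk[y]$. Such a module is free if and only if it is torsion-free---every homogeneous ideal of $\bk[y]$ being a power of $(y)$---which in turn is equivalent to injectivity of multiplication by $y$. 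Hence it suffices to show that $y\colon H^d_G(X)\to H^{d+2}_G(X)$ is injective for every $d$.

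Next I would analyze the $E_\infty$-page of the Serre spectral sequence $E_2^{s,t}=H^s(G;H^t(X))\Rightarrow H^{s+t}_G(X)$ of the Borel fibration $X\to X_G\to BG$, where $H^t(X)$ is regarded as a $\bk[G]$-module via the action; by hypothesis $E_\infty=E_2$. Cup product with $y\in H^2(BG;\bk)$ acts on each $E_2^{s,t}=H^s(G;H^t(X))$, and by the two-periodicity of the cohomology of a cyclic group it is an isomorphism $H^s(G;M)\xrightarrow{\ \cong\ }H^{s+2}(G;M)$ for every $s\geq 1$, while for $s=0$ it is the canonical projection $M^G\to M^G/NM$, whose kernel is $N\cdot M$. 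Therefore the hypothesis $NH^*(X)=0$ makes multiplication by $y$ injective on every $E_2^{s,t}$, hence on every $E_\infty^{s,t}$.

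Finally I would propagate this through the multiplicative spectral-sequence filtration $F^\bullet H^*_G(X)$, which is finite in each degree and satisfies $F^sH^{s+t}_G(X)/F^{s+1}H^{s+t}_G(X)=E_\infty^{s,t}$; after degeneration $y$ has filtration degree two with symbol the class $y\in E_\infty^{2,0}=H^2(BG;\bk)$ used above. If $0\neq z\in H^d_G(X)$ satisfied $yz=0$, pick $s$ maximal with $z\in F^sH^d_G(X)$, so that the symbol $\bar z\in E_\infty^{s,d-s}$ is nonzero; by multiplicativity $yz$ lies in $F^{s+2}H^{d+2}_G(X)$ with symbol $y\cdot\bar z\in E_\infty^{s+2,d-s}$, nonzero by the previous step---contradicting $yz=0$. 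Hence $y$ acts injectively and $H^*_G(X)$ is free over $P_G$. The main obstacle is the middle step: identifying cup product with $y$ on group cohomology with the possibly nontrivial coefficients $H^t(X)$ and recognizing that injectivity can fail only on the row $s=0$, where the obstruction is exactly $NH^t(X)$; the remaining steps are formal. In this argument the hypothesis $\ker(\sigma-I)\subseteq\im(\sigma-I)$ does not appear to be needed; I would check whether it is redundant here, or else use it to produce an explicit homogeneous $P_G$-basis adapted to the $\bk[G]$-module decomposition of $H^*(X)$.
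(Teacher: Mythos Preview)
Your proof is correct and follows the same strategy as the paper's: both establish that multiplication by the degree-two generator $y\in P_G$ is injective on every $E_2^{s,t}=E_\infty^{s,t}$---via the two-periodicity of cyclic-group cohomology for $s\geq 1$ and the vanishing $NH^*(X)=0$ to handle $s=0$---and conclude $P_G$-freeness; your filtration argument makes explicit the passage from $E_\infty$ to $H^*_G(X)$ that the paper leaves implicit, and your use of abstract periodicity replaces the paper's explicit diagonal-approximation computation. Your suspicion about the extra hypothesis is also justified: the paper's own proof does not actually invoke $\ker(\sigma-I)\subseteq\im(\sigma-I)$ for the $P_G$-freeness either (its digression into cup product with the degree-one class $x$ is not needed for the stated conclusion), so that hypothesis appears to be redundant as the theorem is stated.
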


\begin{proof}
	Let $N = (I + \sigma + \cdots + \sigma^{n-1})$, for any $M$ be a $G$-module, the cyclic resolution of $\bk$ as trivial $G$-module induces an isomorphism
	\[ H^p(G;M) = \begin{cases}
		M^G &\text{if $p = 0$}\\
		M^G/NM &\text{if $p$ even}\\
		\ker(N)/(I-\sigma)M   &\text{if $p$ odd}  
	\end{cases}
\]

	Using this remark and that $(I + \sigma + \cdots + \sigma^{n-1})H^*(X) = 0$, the Serre spectral sequence has $E_2$-term given by $E_2^{p,*} = H^*(X)^G$ if $p$ is even and $E_2^{p,*} = H^*(X)/(I - \sigma)H^*(X)$.
	
	We now compute the spectral product $E_2^{p,0} \otimes E_2^{p',q} \xrightarrow{\cup} E_2^{p+p',q}$. In this page, this product coincides (up to a sign) with the topological cup product $H^p(BG; H^0(X)) \otimes H^{p'}(BG;H^q(X)) \rightarrow H^{p+p'}(BG; H^q(X))$. This map can  also be computed using a diagonal approximation $\Delta\colon P_* \rightarrow P_* \otimes P_*$ of the cyclic resolution $P_* \rightarrow \bk$ of $\bk$ as trivial $G$-module in the following way \cite[Ch.V.1]{brown} :
	
	$$ f \cup g \colon P_* \xrightarrow{\Delta} P_* \otimes P_* \xrightarrow{f \otimes g} H^*(X) \otimes H^*(X) \xrightarrow{\cup_X } H^*(X) $$
	is a representative in $H^*(BG; H^*(X))$. 
	
	Write $P_p = \bk[G]e_p$, let $f_p \in \hom_G(P_p,H^0(X))$ be a representative of the generator in $H^p(BG)$. Assume that $p'$ is even and let $g_q  \in \hom_G(P_{p'},H^q(X))$ be a representative of $u \in H^q(X)^G$. We have then that 
	$(f_p \cup g_q)(e_{p+p'}) = f_p(e_p)g_q(e_{p'}) = u \in H^1(X)^G$.  Therefore, the multiplication by the generator of $H^p(BG)$ is the identity if $p$ is even and it is zero if $p$ is odd. This shows that each  on row $q$, $E_2^{2*,q} \cong P_G \otimes H^{q}(X)^G$.

    Let now $v \in E_2^{p',q} = \ker(N)/\im(\sigma - I)H^q(X)$ for $p'$ odd. Choose $g_2 \in \hom_G(P_{p'}, H^q(X))$ a representative of $v$. A similar computation as before shows that the product $E_2^{p,0} \otimes E_2^{p',q} \xrightarrow{\cup} E_2^{p+p',q}$ is given by 
    
	\[
	(f_p \cup g_2)(e_{p+p'})  = \begin{cases}
		v & \text{$p$ even}\\
		\sum_{k=1}^{n-1} k\sigma^k(v) & \text{$p$ odd}
	\end{cases}
	\]
Notice that the element $\sum_{k=1}^{n-1} k\sigma^k(v) \in H^q(X)^G$ since $v \in \ker(N)$. Let $A = \sum_{k=1}^{n-1}k \sigma^k$ be the representation in $H^q(X)$. Notice that the polynomial $(x-1)$ divides $\sum_{k=1}^{n-1} kx^k$, and thus   the multiplication by the generator of $H^p(BG)$ is an isomorphism for any $p$.  Since each row in the $E_2$-term is a free $P_G$-module, and by assumption $E_2 \cong E_\infty$, we have that $H^*_G(X)$ is a free $P_G$-module.
\end{proof}

\section{Freeness over \texorpdfstring{$P_G$}{P\_G}) is not hereditary}\label{se:Example}

If $G$ is a topological group and $X$ is a $G$-space such that it is $G$-equivariantly formal, then for any subgroup $K \subseteq G$ it follows that $X$ is also $K$-equivariantly formal. This is a consequence of the factorization of the canonical map $H^*_K(X) \rightarrow H^*(X)$ through the surjective map $H^*_G(X) \rightarrow H^*(X)$. Let $G$ be a finite cyclic group. In this section we discuss an example of a $G$-space $X$  such that its $G$-equivariant cohomology is free over $P_G$ but its $K$-equivariant cohomology is not free over $P_K$ for some subgroup $K \subseteq G$. Therefore, freeness over $P_G$ is not hereditary over subgroups as freness over $R_G$ is.

Let $G = C_{n}$ and $\bk = F_p$ be a field of characteristic $p \divides n$ and $K = C_{p}$. Let $X = \Sigma G$ be the suspension of the discrete group $G$. The left multiplication of $G$ on itself induces an action of $G$ on $X$. It can be seen that $X \simeq \bigvee_{n-1} S^1$, $X^G = X^K = \{p_1, p_2\}$, and that $G$ does not act trivially on $H^1(X;\bk)$. This shows that  $b(X^K) = 2 < b(X) = n$ and thus $X$ is not $G$-equivariantly formal for $n > 2$ (and neither $K$-equivariantly formal). The $G$-equivariant cohomology was computed in \cite[Sec.10]{ptori} and it is shown that
 
\[ H^*_G(X) \cong  (R_G \oplus R_G)/(\bk \oplus \bk) \] as $R_G$-modules. Unless $n \neq 2$, $H^*_G(X)$ is not a free $R_G$-module, but it is free over $P_G$.

On the other hand, we will use Theorem \ref{teo3:2} to see that $H^*_G(X)$ is free over $P_G$ and that $H^*_K(X)$ contains $P_K$-torsion if $n \neq p$. 
Firstly, choose a basis $\{e_1,\ldots, e_{n-1}\}$ of $H^1(X;\F_p)$ such that $C_n$ is generated by the representation matrix 
\[ A = \left( 
\begin{array}{c|c}
	\bm{0} & -1  \\ \hline
	I_{n-2} & \bm{-1} 
\end{array}
\right)
\]
on $H^1(X:\F_p)$. It can be checked that  the map  $N = I + A + A^2 + \cdots + A^{n-1}$ is zero since $Ne_i = \sum_{k=1}^{n-1}e_k - \bm{1} = 0$ for any $i=1,\ldots,n-1$.  Furthermore,  $\ker(A-I)$ is spanned by the vector $[1,2,....,n-2,-1]^T \in \F_p^{n-1}$. Therefore, the $E_2$-term of the Serre spectral sequence arising from the fibration $X \rightarrow X_G \rightarrow BG$ is given by
 
\begin{center}
	\begin{sseqdata}[name =tikz background example2,cohomological Serre grading,classes = {draw = none},grid =none,x label ={ $H^*(G)$ },y label ={ $H^*(X)^G$ }]
		\class["\bk"](0,0)
		\class["\bk"](2,0)
		\class["\bk"](0,1)\d["d_2"{ pos =0.7, yshift =1.8em}]2(0,1)
		\class["\bk"](1,0)
		\class["\bk"](1,1)
		\class["\bk"](3,0)
		\class["\bk"](3,1)
		\class["\bk"](2,1)
		\class["0"](0,2)
		\class["0"](1,2)
		\class["0"](2,2)
	\end{sseqdata}
	\printpage[name =tikz background example2,page =2]
\end{center}
where $H^{2*}(BG;H^1(X)) = H^1(X)^G \cong \F_p$ and $H^{2*+1}(BG;H^1(X)) = \coker(A-I) \cong \bk$. 

The localization theorem for cyclic groups implies that $H^k_G(X; \bk) \cong H^k_G(X^K; \bk)$ for $k > 1$, and that the map $H^1_G(X) \rightarrow H^1_G(X^K)$ is surjective.  Therefore, by degree reasons, we must have that the spectral sequence degenerates and so $E_2 \cong E_\infty$. Finally, by Theorem \ref{teo3:2}, we have that $H^*_G(X)$ is a free $P_G$-module, and it is of rank $4$.

Now we will show that the $K$-equivariant cohomology of $X$ is not free over $P_K$. First, comparing the spectral sequences $E_r$ and $(\tilde{E}_r)$ for both $X_G$ and $X_K$ respectively, we see that $\tilde{E}_r$ also degenerates at the second page. The representation of $K$ on $H^1(X: \F_p)$ is given by the matrix $B = A^{n/p}$. Notice $\tilde{N} = I + B + \cdots B^{p-1} = 0$ if and only if $ p = n$ as for any $i =1,\ldots,n-1$ and $k=0,\ldots, p-1$, it holds that $B^ke_i = e_t$ or $B^ke_i = \bm{-1}$ where $t$ is uniquely determined by $i$ and $k$. This shows that $\im(\tilde{N}) \neq 0$ and thus the $\tilde{E}_2$-term contains $P_K$-torsion.

\section{Actions on Some Polyhedral Products}\label{sec:poly}

Polyhedral products are spaces that arise as cartesian products based on a simplicial complex. They extend the notion of moment angle complex in toric topology and they have recently been  objects of interested in combinatorics, symplectic geometry and homotopy theory among other fields. A compendium of the main properties and recent development on this class of spaces is found in \cite{tbari} and the references therein. We start this section by briefly introducing polyhedral products and setting up the preferred notation to be used throughout this document.
  
For any integer $n$, let $[n] = \{1,2,\ldots,n\}$. A finite simplicial complex $K$ over $[n]$ is a subset $K \subset \mathcal{P}([n])$ such that $\{i\} \in K$ for all $i=1,\ldots,n$ and if $\tau \subseteq \sigma$ and $\sigma \in K$, then $\tau \in K$. Each $\sigma \in K$ is called a face, and a simplicial complex can be described by its maximal faces. For example, the boundary of the $n$-gon is the simplicial complex whose maximal faces are of the form $\{i,i+1\}$ and $\{1,n\}$ for $i=1,\ldots,n-1$.

Let $(X,A)$ be a topological pair and $K$ be a a simplicial complex. For any $\sigma \in K$ we denote by $(X,A)^\sigma$ the space 
$Y_1 \times \cdots \times Y_n \subseteq X^n$ where $Y_i = X$ if $i \in \sigma$ and $Y_i = A$ if $i \notin \sigma$.

Let $n \geq 3$ be an integer and $K_n$ be the boundary of the $n$-gon. Denote by $X = Z_{K_n}(D^1,S^0)$ the polyhedral product over $K_n$. $X$ is homeomorphic to a Riemann surface of genus $1+(n-4)2^{n-3}$ \cite{coexter}. Let $G$ be the cyclic group of order $n$ which acts canonically on $X$ by identifying it with a subgroup of $Aut(K_n) = D_{2n}$ the dihedral group of order $2n$. In \cite[Ch.IV,V]{Das} it was described the $\mathbb{Z}[G]$-module structure of $H^*(X;\mathbb{Z})$ and showed that the integral cohomological Serre spectral sequence associated to the fibration $X \rightarrow X_G \rightarrow BG$ degenerates at the $E_2$-term. It was also shown that it collapses when coefficients over a field of characteristic zero or coprime to $n$ are considered.   We  will use the techniques described in previous sections to study the $G$-equivariant cohomology of this polyhedral product when the characteristic of the ground field in cohomology divides $n$.

Let $p$ be a prime dividing $n$, $K \subseteq G$ be the subgroup of order $p$ and $\mathbb{\bk}$ be a field of characteristic $p$. 
We start by studying the equivariant formality of $X$ under the action of $K$. The first result is the following.

\begin{proposition}\label{prop5:1}
$X$ is $K$-equivariantly formal over $\mathbb{F}_p$ if and only if $n = p =3$ or $n = 4$, $p=2$.
\end{proposition}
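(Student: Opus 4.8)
The plan is to reduce the statement to the Betti-number criterion for $K$-equivariant formality recalled above (the case of a cyclic group of prime order): since $K \cong C_p$, the space $X$ is $K$-equivariantly formal over $\F_p$ if and only if $b(X) = b(X^K)$, where $b(Y) = \sum_{k\ge 0}\dim_{\F_p}H^k(Y;\F_p)$. So the whole proposition comes down to computing these two numbers and comparing them.

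First I would record $b(X)$. Since $X$ is a closed orientable surface of genus $g = 1+(n-4)2^{n-3}$, whose integral homology is torsion-free, $\dim_{\F_p}H^1(X;\F_p) = 2g$ for every prime $p$, so
\[ b(X) = 2 + 2g = 4 + (n-4)2^{n-2}. \]

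Next I would identify the fixed set $X^K$. The group $G = C_n = \langle\sigma\rangle$ acts on $X \subseteq (D^1)^n$ through its identification with the rotation subgroup of $\mathrm{Aut}(K_n) = D_{2n}$, hence by cyclically permuting the $n$ coordinates; thus $K = \langle\sigma^{n/p}\rangle$ permutes the coordinates along the orbits of the subgroup $\{0, n/p, 2n/p, \dots, (p-1)n/p\}$ of $\Z/n$, which are its $n/p$ cosets, each of size $p$. A point $x = (x_1,\dots,x_n)$ is fixed by $K$ precisely when it is constant on each of these orbits; and for $x$ to lie in $X$ the set $\{i : x_i \notin S^0\}$ must be a face of $K_n$, hence of cardinality at most $2$. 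As this set is a union of $K$-orbits of size $p$, I would argue it is necessarily empty: for $p \ge 3$ a single orbit already has at least $3$ elements, while for $p = 2$ a single orbit is an antipodal pair $\{i, i+n/2\}$, which is an edge of the $n$-gon only if $n/2 \equiv \pm 1 \pmod n$, impossible for $n \ge 4$. Consequently each $K$-fixed point has all coordinates in $S^0 = \{\pm1\}$ and is constant on the $n/p$ orbits, so $X^K$ is the discrete set $\{\pm1\}^{n/p}$ and $b(X^K) = 2^{n/p}$.

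Finally, by the criterion, $X$ is $K$-equivariantly formal over $\F_p$ if and only if $4 + (n-4)2^{n-2} = 2^{n/p}$. For $n = 3$ (which forces $p = 3$) the left side equals $2 = 2^{3/3}$, and for $n = 4$ (which forces $p = 2$) it equals $4 = 2^{4/2}$, so both asserted cases occur; conversely, for $n \ge 5$ one has $n/p \le n/2 < n-2$, hence $2^{n/p} \le 2^{n/2} < 2^{n-2} < 4 + (n-4)2^{n-2}$, so the equality cannot hold. This establishes the proposition. The step I expect to be the main obstacle is the identification of $X^K$, specifically the verification that no nonempty union of $K$-orbits on $[n]$ can be a face of $K_n$: this forces the case $p = 2$ (orbits of size $2$, which could a priori be edges of the polygon) to be handled separately from the case $p \ge 3$.
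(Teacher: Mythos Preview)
Your proof is correct and follows essentially the same approach as the paper: both reduce to the Betti-number criterion $b(X)=b(X^K)$, identify $X^K\cong(S^0)^{n/p}$, and then solve the resulting equation $4+(n-4)2^{n-2}=2^{n/p}$. Your argument is in fact a bit more careful than the paper's in justifying why a $K$-fixed point must have all coordinates in $S^0$ (you explicitly rule out the possibility that a $K$-orbit is an edge when $p=2$), and your inequality $2^{n/p}\le 2^{n/2}<2^{n-2}<4+(n-4)2^{n-2}$ for $n\ge 5$ is a cleaner way to dispose of the remaining cases than the paper's parity/case analysis.
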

\begin{proof}
Recall that $X$ is $K$-equivariantly formal if and only if $b(X) = b(X^K)$. The elements of $X$ are represented by tuples $(x_1,\ldots,x_n) \in (S^0)^n$ such that two consecutive elements $x_i,x_{i+1} \in D^1$ or $x_1,x_n \in D^1$.  Let $\sigma$ be the generator of $G$ that acts on $X$ as the cyclic permutation and thus the generator of $K$ is $\sigma^{n/d}$. This shows that $(x_1,\ldots,x_n) \in X^K$ if and only if for all $i$, $x_i \in S^0$ and $x_i = x_{i+\frac{n}{p}}$ (the sum in the subindex is considered mod $p$). This shows that $X^K \cong (S^0)^{\frac{n}{p}}$ and thus $b(X^K) = 2^{n/p}$. Therefore, $X$ is equivariantly formal if and only if 

\[ 2^{n/p} = 2 + 2(1+(n-4)2^{n-3}).\]

This equation is equivalent to $2^{n/p -2} = 1 + (n-4)2^{n-4}$. Notice that if $n = p$, then the left hand side is equal to $\frac{1}{2}$\.  The right hand side is equal to $\frac{1}{2}$ if and only if $n = 3$. Notice that the case $n>p>2$ is impossible by taking the equation modulo $2$. Therefore, the only remaining valid solution is $n = 4$ and $p = 2$.
\end{proof}

The following results of this section, require a combinatorial argument that involves \emph{Lyndon words}. A Lyndon word of length $n$ is a word in the alphabet $\{0.1\}$ which is minimal (with respect to the lexicographic order) among all its circular rotations.  We denote by $\ell_n$ the number of Lyndon words of length $n$. If $\omega$ is such a word, the number of times that the subword `$01$' appears on $\omega$ is denoted by $\iota(\omega)$.

\begin{theorem}
	Let $\bk$ be a field of characteristic $p \mid n$. The cohomological Serre spectral sequence associated to the fibration $X \rightarrow X_G \rightarrow BG$ degenerates at the $E_2$-term.
\end{theorem}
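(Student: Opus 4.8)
The plan is to exploit that $X$ is a closed orientable surface, so that the spectral sequence $E_2^{p,q} = H^p(BG;\mathcal{H}^q(X;\bk))$ is concentrated in the rows $q = 0,1,2$ and only finitely many differentials can be non-zero, and then to kill all of them using a $G$-fixed point together with Poincar\'e duality along the fibre. Write $R_G = H^*(BG;\bk)$; note that $H^0(X;\bk)\cong\bk$ and --- because $G$ acts on $X$ by orientation-preserving homeomorphisms --- $H^2(X;\bk)\cong\bk$ are trivial $G$-modules, and let $\mu\in E_2^{0,2} = H^2(X;\bk)^G$ denote the fundamental class of the fibre. For bidegree reasons the only potentially non-zero differentials are $d_2\colon E_2^{p,1}\to E_2^{p+2,0}$, $d_2\colon E_2^{p,2}\to E_2^{p+2,1}$, and $d_3\colon E_3^{p,2}\to E_3^{p+3,0}$.

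The first step is to observe that the action has a fixed point: the constant tuples $(\pm1,\dots,\pm1)$ lie in $\mathcal{Z}_{K_n}(D^1,S^0)$, since their set of interior coordinates is empty, hence a face of $K_n$, and they are fixed by the cyclic permutation. Thus $X_G\to BG$ admits a section, so $R_G\to H^*(X_G;\bk)$ is split injective and $E_\infty^{*,0} = E_2^{*,0}$; consequently no differential can reach the bottom row, which disposes of $d_2$ out of row $1$ and of $d_3$ out of row $2$.

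It remains to show that $d_2$ vanishes on $E_2^{*,2}$. Since $E_2^{*,2}\cong R_G\cdot\mu$ is free of rank one over $R_G = E_2^{*,0}$ and $d_2$ is a derivation vanishing on $E_2^{*,0}$, it suffices to prove $d_2(\mu)=0$, i.e. that the restriction $H^2(X_G;\bk)\to H^2(X;\bk)$ is surjective (equivalently $E_\infty^{0,2} = E_2^{0,2}$). For this I would run the relative Serre spectral sequence of $(X,X\setminus x_0)_G\to BG$ at a fixed point $x_0$: by excision $H^*(X,X\setminus x_0;\bk)$ is $\bk$ concentrated in degree $2$, and, since $G$ preserves the local orientation at $x_0$, it is a trivial $G$-module; so this spectral sequence has a single non-zero row and collapses, giving that the edge map $H^2_G(X,X\setminus x_0;\bk)\to H^2(X,X\setminus x_0;\bk)$ is onto. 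Now compare the long exact sequences of the equivariant and the ordinary pair $(X,X\setminus x_0)$: the map $H^2(X,X\setminus x_0;\bk)\to H^2(X;\bk)$ is onto because $H^2(X\setminus x_0;\bk)=0$ (a punctured surface retracts onto a graph), so the composite $H^2_G(X,X\setminus x_0;\bk)\to H^2(X;\bk)$ is onto, and therefore so is $H^2_G(X;\bk)\to H^2(X;\bk)$. Hence $d_2(\mu)=0$, so $d_2\equiv 0$ on row $2$, and the spectral sequence degenerates at $E_2$.

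The step requiring genuine care is the input that $G$ acts on $X$ preserving orientation --- equivalently that $H^2(X;\bk)$, and the local orientation module at a fixed point, are trivial $G$-modules --- since this is exactly what makes the relevant coefficient systems trivial, and it really does fail for orientation-reversing actions (for instance the antipodal $C_2$-action on $S^2$, whose Borel spectral sequence does not collapse). I would verify it by examining the action near $x_0=(1,\dots,1)$: the link of $x_0$ in $X$ is a circle cut into $n$ arcs indexed cyclically by $\{1,\dots,n\}$, and the generator $\sigma$ shifts this indexing by one, i.e. it rotates the link, hence preserves the orientation of $X$. I note that, in contrast with the results that follow, this collapse statement does not require the Lyndon-word combinatorics; an alternative proof, comparing the Poincar\'e series of $E_2$ with that of $H^*_G(X;\bk)$, would however go through the explicit $\bk[G]$-module structure of $H^1(X;\bk)$.
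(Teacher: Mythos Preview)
Your proof is correct and takes a genuinely different route from the paper. The paper argues by a dimension count: it computes $\dim H^k_G(X^K;\bk)$ from the orbit decomposition of $X^K\cong (S^0)^{n/p}$ under $G$ (which is indexed by Lyndon words), invokes the localization theorem to identify this with $\dim H^k_G(X;\bk)$ for $k>2$, and then checks that the total dimension on each antidiagonal of $E_2$ already equals this number, using the explicit $\bk[G]$-module structure of $H^1(X;\bk)$ worked out in \cite{Das}. Since no room is left for any differential, $E_2=E_\infty$.

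Your argument is more geometric and more general: the existence of a $G$-fixed point gives a section of $X_G\to BG$, so nothing can hit the bottom row; and the orientation-preserving nature of the action (verified locally at the fixed point, where the generator rotates the link circle) makes $H^2(X;\bk)$ a trivial module and lets the relative spectral sequence of $(X,X\setminus x_0)$ produce an equivariant class hitting $\mu$, so nothing leaves the top row either. This works verbatim for any orientation-preserving action of a finite group on a closed oriented surface with a fixed point, and avoids entirely the Lyndon-word combinatorics and the description of $H^1(X;\bk)$ as a $\bk[G]$-module. The trade-off is exactly the one you flag at the end: the paper's counting argument sets up precisely the ingredients (orbit counts, the decomposition of $H^1$) that feed into the subsequent corollary on freeness over $P_G$, whereas your proof of degeneration gives no information about $H^1(X;\bk)$.
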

\begin{proof}
As in the proof of \ref{prop5:1}, the fixed point $X^K \cong (S^0)^{n/p}$ and the action of $G$ restricted to $X^K$ coincides with the cyclic permutation of $G/K = C_{n/p}$ on $X^K$. Since $X^K$ is totally disconnected, $H^*_G(X^K;\bk)$ is the direct sum of the $G$-equivariant cohomologies of the $G$-orbits of $X$. The non-trivial orbits are represented by the Lyndon word of length $d$ where  $d \mid \frac{n}{p}$. This implies that there is an isomorphism of $R_G$-modules
\[ H^*_G(X^K;\bk) \cong \bigoplus_{w \in X^K/G} H^*(BG_w;\bk)\]
where $G_w \subseteq G$ is the stabilizer subgroup of the representative $w \in X^K/G$. Notice that $K \subseteq G_w$ and therefore $\dim H^k(BG_w;\bk) = 1$ for all $k$. If $\ell_m$ denotes the number of Lyndon words of length $m$, it can be seen that
\[ \dim H^k_G(X^K;\bk) = \sum_{d \mid \frac{n}{p} } \ell_{d}.\]
By the Localization theorem, we have that $\dim H^k_G(X;\bk)  = \dim H^k_G(X^K;\bk)$ for $k > 2$. To finish the proof, it is enough to show that in the $E_2$-term of the Serre spectral sequence associated to the fibration $X \rightarrow X_G \rightarrow BG$, we must have 
\begin{equation}\label{eq5.1}
	\sum_{i=0}^\infty \dim E_2^{i, k-i} = \sum_{d \mid \frac{n}{p} } \ell_{d}.
\end{equation}
Observe that the spectral sequence contains only three non-zero rows as $H^q(X) = 0$ for $q > 2$. The local coefficient system is trivial when $b = 0,2$, and the $G$-module structure of $H^1(X)$ was described in \cite[Sec. 4.3]{Das}. Indeed, if $D_{n,p} = \{ d \in \mathbb{Z}_{+} : d \mid n, p \mid \frac{n}{d} \}$, it is shown that for $k > 0$,  $\displaystyle\dim H^k(BG;H^1(X)) = \sum_{ d \in  D_{n,p}, d \neq 1 } \ell_d $. This shows that $\displaystyle\dim E_2^{i, k-i} = \sum_{ d \in  D_{n,p} } \ell_d$ for $k > 1$. Finally, notice that $d \in D_{n,p}$ if and only if $d \mid \frac{n}{p}$ which implies that the equality in \ref{eq5.1} holds.
\end{proof}

Let $L(n,k)$ be the number of $n$-Lyndon words with exactly $k$ blocks of zeros and $\mathcal{L}_n^+$ be the set of Lyndon words that have $k$ blocks of zeros with $k > 1$.

\begin{corollary}
	The $G$-equivariant cohomology of the polyhedral product $Z_{K_n}(D^1,S^0)$ is free over $P_G$ if and only if $n=p=3$ or $n=4,p=2$.
\end{corollary}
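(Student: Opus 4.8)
The plan is to apply Theorem \ref{teo3:2} and the earlier $K$-equivariant computation to separate the "free over $P_G$" case from the "contains $P_K$-torsion" case, exactly as in Section \ref{se:Example}. Since the previous theorem establishes that the Serre spectral sequence of $X \to X_G \to BG$ degenerates at $E_2$, the two hypotheses left to verify in order to conclude freeness over $P_G$ are that the norm element $N = I + \sigma + \cdots + \sigma^{n-1}$ annihilates $H^*(X;\bk)$ and that $\ker(\sigma - I) \subseteq \im(\sigma - I)$ on each $H^q(X;\bk)$. On the degrees $q = 0, 2$ the action is trivial, so $N$ acts as multiplication by $n \equiv 0 \pmod p$ and both conditions are automatic; the content is entirely in degree $q = 1$, where the $G$-module structure of $H^1(X;\bk)$ is the one described in \cite[Sec.~4.3]{Das}. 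So the first step is to write out the $\bk[G]$-module $H^1(X;\bk)$ as a direct sum of cyclic modules indexed by Lyndon words (this is the combinatorial input), and on each summand compute $N$ and compare $\ker(\sigma-I)$ with $\im(\sigma-I)$.

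First I would reduce to the case analysis: by Proposition \ref{prop5:1} and the hereditary failure discussion, if $n=p=3$ or $n=4, p=2$ the space is $K$-equivariantly formal, hence $G$-equivariantly formal by Corollary \ref{cor:3.7}, hence $H^*_G(X)$ is free over $R_G$ and a fortiori over $P_G$ by Corollary \ref{cor:2.1}; this handles the "if" direction cheaply (alternatively, verify the two hypotheses of Theorem \ref{teo3:2} directly in these small cases). For the "only if" direction, I would run the argument from the last part of Section \ref{se:Example} in this polyhedral setting: restrict the $G$-action on $H^1(X;\bk)$ to $K = C_p$ and show that the restricted norm $\widetilde N = I + \tau + \cdots + \tau^{p-1}$ (with $\tau = \sigma^{n/p}$) is \emph{nonzero} on $H^1(X;\bk)$ whenever we are not in one of the two exceptional pairs. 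Concretely, each Lyndon word $w$ of length $d$ with $p \mid \tfrac nd$ contributes a cyclic $\bk[G]$-summand on which $K$ acts, and one computes $\widetilde N$ on the basis vectors indexed by circular rotations of $w$; as in the model example, $\widetilde N$ sends a basis element to either another basis element or to the fixed vector $\bm{1}$, and the sum telescopes to something nonzero unless the orbit structure forces cancellation, which only happens in the degenerate small cases. Once $\im(\widetilde N) \neq 0$, the (degenerate) $\widetilde E_2$-term of the Serre spectral sequence for $X_K \to BK$ has a summand $H^1(X)/\widetilde N$-type contribution that is not free over $P_K$, giving $P_K$-torsion; and since freeness over $P_G$ is equivalent to freeness over $P_K$ for the restricted action when the spectral sequences degenerate (both $P_G \to P_K$ is an iso of rings by Corollary \ref{cor:2.1}, and the comparison of spectral sequences shows $\widetilde E_r$ degenerates too), this rules out freeness over $P_G$.

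The bookkeeping that needs care is the Lyndon-word count: one must identify precisely which Lyndon words $w$ give summands where $\widetilde N$ vanishes versus where it does not. The quantities $L(n,k)$ and $\mathcal L_n^+$ introduced just before the corollary are presumably there to package this — a Lyndon word with $k>1$ blocks of zeros is exactly the obstruction to $\widetilde N$ vanishing on its summand, so $\mathcal L_n^+ = \emptyset$ should be equivalent to $H^*_G(X)$ being free over $P_G$. I would therefore make the bridge explicit: show $\widetilde N$ restricted to the summand of $w$ is zero iff $w$ has a single block of zeros (equivalently $\iota(w) = 1$), translate "$X$ free over $P_G$" into "every Lyndon word of the relevant lengths has exactly one block of zeros", and finally observe by a short enumeration that this combinatorial condition holds precisely when $n=p=3$ (only Lyndon word of length $3$ with $p\mid n/d$ is $001$, one block) or $n=4, p=2$ (only length-dividing-$2$ Lyndon words in play, all with one block), and fails otherwise — e.g. the length-$5$ Lyndon word $01011$ or, for $p\mid n$ with $n/p$ having a divisor $d$ admitting a Lyndon word with two zero-blocks, such a word always exists once $n/p$ is large enough. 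The main obstacle is precisely this last combinatorial step: proving that for \emph{every} pair $(n,p)$ outside the two exceptions there exists a Lyndon word $w$ with $\ell(w) \mid \tfrac np$ (and $\ell(w) > 1$, or length-$1$-handling) carrying more than one block of zeros, so that $\mathcal L_n^+ \neq \emptyset$; everything else is a direct transcription of the Section \ref{se:Example} method and the already-established $E_2$-degeneration.
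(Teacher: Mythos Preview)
Your proposal has two genuine gaps, one in each direction.

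\textbf{The ``if'' direction.} You argue that for $n=4,\,p=2$ the space is $K$-equivariantly formal (true, by Proposition~\ref{prop5:1}) and therefore $G$-equivariantly formal by Corollary~\ref{cor:3.7}. But Corollary~\ref{cor:3.7} requires \emph{both} $K$-equivariant formality \emph{and} $G$-equivariant formality of $X^K$; you have not checked the second condition, and in fact it fails. For $n=4,\,p=2$ the space $X$ is \emph{not} $G$-equivariantly formal (compare the summary in Section~\ref{se:intro}: $G$-equivariant formality holds only for $n=p=2$ or $n=p=3$). So freeness over $P_G$ here is genuinely weaker than freeness over $R_G$, and cannot be obtained through equivariant formality. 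Your fallback (``verify the hypotheses of Theorem~\ref{teo3:2} directly'') also needs care: on $H^0$ and $H^2$ the action is trivial, so $\ker(\sigma-I)$ is everything while $\im(\sigma-I)=0$, and the inclusion $\ker\subseteq\im$ you call ``automatic'' fails. One must argue directly that those rows of $E_2$ are free over $P_G$ (they are, since $R_G$ is free of rank $2$ over $P_G$), rather than invoke the theorem as a black box.

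\textbf{The ``only if'' direction.} Your strategy is to produce $P_K$-torsion in $H^*_K(X)$ and then transport it to $P_G$-torsion in $H^*_G(X)$ via the claimed equivalence ``freeness over $P_G$ $\Leftrightarrow$ freeness over $P_K$ when both spectral sequences degenerate.'' This equivalence is false, and Section~\ref{se:Example}---the very section you take as your model---is devoted to a counterexample: for $X=\Sigma C_n$ both spectral sequences degenerate, $H^*_G(X)$ is free over $P_G$, yet $H^*_K(X)$ has $P_K$-torsion when $n\neq p$. So exhibiting $P_K$-torsion says nothing about $P_G$-freeness, and the argument collapses.

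The paper's proof avoids $K$ entirely. It uses the $\bk[G]$-module decomposition of $H^1(X;\bk)$ from \cite{Das}: a direct sum of modules $M_w$ on which the norm vanishes (these contribute free $P_G$-summands via the mechanism of Theorem~\ref{teo3:2}) together with copies of the regular module $\bk[G]$, one block of $\iota(w)-1$ copies for each $w\in\mathcal{L}_n^+$. The regular summands are $G$-acyclic, so in the degenerate spectral sequence they contribute only a finite-dimensional piece sitting in $E_2^{0,1}$; that piece is annihilated by $y$ and is exactly the $P_G$-torsion. Hence $H^*_G(X)$ is $P_G$-free iff $\mathcal{L}_n^+=\emptyset$, i.e.\ $\ell_n=L(n,1)$, which holds precisely for $n=3$ and $n=4$. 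The detour through $K$ is neither needed nor valid.
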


Let $X = Z_{K_n}(D^1,S^0)$. Following \cite[Thm.4.3.3]{Das}, there is an isomorphism of $\bk[G]$-modules
\[ H^1(X;\bk) \cong  \bigoplus_{\substack{ { w \in \mathcal{L}_d} \\ {d \in D_{n,p}} }} M_w \oplus \bigoplus_{w \in \mathcal{L}_n^+} \bk[G]^{\iota(w) -1} \]
where the representation $A_w$ of $G$ in $M_w$ satisfies that $N_w = I + A + \cdots + A^{n-1} = 0$ and thus they generate the $P_G$-free part of $H^*_G(X)$. On the other hand, the $P_G$-torsion part of $H^*_G(X)$ is concentrated in degree $1$ and it has dimension $|\mathcal{L}_n^+| = l_n  - L(n,1)$. Finally, we see that the $G$-equivariant cohomology of $X$ contains $P_G$-torsion if and only if $l_n = L(n,1)$ which only holds if and only if $n=3$ or $n=4$.

\end{document}